\documentclass[10pt]{amsart}

\usepackage{fullpage,amssymb,amsmath,amsthm,graphicx,mathrsfs,bm,hyperref,bm,mathtools,tikz,enumitem}
\usepackage{pgfplots,subcaption}
\usetikzlibrary{patterns}
\pgfplotsset{compat=1.18}
\usepackage[english]{babel}

\newtheorem{theorem}{Theorem}[section]

\newtheorem{corollary}[theorem]{Corollary}
\newtheorem{proposition}[theorem]{Proposition}
\newtheorem{lemma}[theorem]{Lemma}

\theoremstyle{definition}
\newtheorem{definition}[theorem]{Definition}
\newtheorem{remark}[theorem]{Remark}

\newcommand\R{\mathbb{R}}
\newcommand\Q{\mathbb{Q}}
\newcommand\Z{\mathbb{Z}}
\newcommand\N{\mathbb{N}}
\newcommand\1{\mathbf{1}}
\newcommand{\ssum}[1]{\sum\limits_{\substack{#1}}}
\newcommand{\e}{\mathrm{e}}
\newcommand{\eps}{\varepsilon}
\renewcommand{\hbar}{\overline{h}}
\newcommand{\deltabar}{\overline{\delta}}
\newcommand{\abs}[1]{\left| #1 \right|}
\newcommand{\df}{{\rm d}}

\newcommand{\cB}{{\mathcal B}}
\newcommand{\cF}{{\mathcal F}}
\newcommand{\cP}{{\mathcal P}}

\newcommand{\image}[2]{\begin{center}
\includegraphics[width=#1\textwidth]{#2.png}
\end{center}}

\newcommand{\LattE}{{\texttt{LattE}}}

\DeclareMathOperator{\card}{card}

\title{Computing sieve integrals using LattE, and the density of integers with a localized divisor}

\author{Sary Drappeau}
\address{Laboratoire de Mathématiques Blaise Pascal \\ Université Clermont Auvergne \\ Institut Universitaire de France \\ 3 place Vasarely \\ 63178 Aubière Cedex, France}
\email{sary.drappeau@uca.fr}
\thanks{}

\author{Adrien Mounier}
\address{Institut de Mathématiques de Marseille\\ Aix-Marseille Université \\
  163 Av. de Luminy\\
  13009 Marseille, France}
\email{adrien.mounier@univ-amu.fr}

\thanks{}

\begin{document}


\date{}

\begin{abstract}
  We consider the problem of estimating numerically integrals of the shape
  $$ \int_P \frac{\df t}{t_1 \dotsb t_k} $$
  where~$P \in \R_{>0}^k$ is a convex polytope, $t=(t_1,\dotsc, t_k)$ and~$\df t$ is the Lebesgue measure.
  This type of integral appears frequently in main terms of sieve theory.

  We propose a simple method, based on the \LattE{} software for integration of polynomials over polytopes,
  which computes rigorous bounds on this integral in polynomial time with respect to the precision (in bits).
  We test the method on several examples from the literature of sieve theory.

  We apply our results to compute numerical approximations to the natural density
  $$ h(\alpha, \beta) := \operatorname{density}\{n\in\N, \exists d\mid n, d\in [n^\alpha, n^\beta]\}, \qquad (0<\alpha<\beta<1) $$
  of integers having a localized divisor, in the region~$\beta - \alpha \geq 0.02$.
  One ingredient involved is a refined formula for~$h(\alpha, \beta)$ which involves a manageable number of terms for these~$\alpha, \beta$.
  As a corollary, we give a numerical approximation of the leading constant in a theorem of Haddad and Koukoulopoulos on the average of the logarithm of middle-divisors of integers.
\end{abstract}

\subjclass[2020]{Primary 65D30; Secondary 65-04, 11N36}

\keywords{Integrals over polytopes, integers with a localized divisor, LattE software}

\thanks{This work was partially funded by ANR-20-CE91-0006 / FWF-I-4945-N, ANR-24-CE93-0016 / FNS-10.003.145}

\maketitle

\section{Introduction}

\subsection{Sieve integrals}

The present paper is concerned with the numerical evaluation of integrals of the type
\begin{equation}
  I(P) := \int_P \frac{\df t}{t_1 t_2 \dotsb t_k},\label{eq:integral-prototype}
\end{equation}
where~$P \subset \R_{>0}^k$ is a convex polytope, $t = (t_1, \dotsc, t_k)$ and~$\df t$ denotes the Lebesgue measure on~$\R^k$.
Changing variables, this can also be seen as the volume of the convex set~$\log P$ (which is not a polytope in general).
This family of integrals is ubiquitous in sieve theory, where sums over prime numbers of the shape
$$ S(P, x) := \ssum{p_1, \dotsc, p_k \\ (*)} \frac1{p_1 \dotsb p_k} $$
naturally arise as sieve upper- and lower-bounds; here~$(*)$ denotes a collection of inequalities involving products of the~$p_j$, which can be typically encoded as a disjunction of conditions of the shape
$$ \Big(\frac{\log p_1}{\log x}, \dotsc, \frac{\log p_j}{\log x}\Big) \in P $$
where~$P$ is a convex polytope in~$\R_{>0}^k$ as above. By Mertens' theorem (see \emph{e.g.}~\cite[Lemma~9]{Tenenbaum1980}), we have~$S(P, x) \to I(P)$ as~$x\to \infty$. Occurrences of integrals of the shape~\eqref{eq:integral-prototype} in standard textbooks can be found in:
\begin{itemize}
  \item Halberstam-Richert's ``Sieve methods''~\cite{HalberstamRichert1974}: p.243, p.322;
  \item Friedlander-Iwaniec's ``Opera de cribro''~\cite{FriedlanderIwaniec2010}: p.237, p.321, p.359, p.444, p.484;
  \item Harman's ``Prime-detecting sieves''~\cite{Harman2007}: pp.58-59, p.74, pp.88-90,
\end{itemize}
among many other sources.

The most fundamental question in sieve theory is to devise a sieving procedure to isolate primes, or almost-primes, in a sequence, given its distribution in arithmetic progressions (say). The main terms arising from the sieving procedure often take the shape of~$S(P, x)$ for some~$P$, and there is therefore need for rigorous and reasonably fast methods of numerical computations of integrals of the shape~\eqref{eq:integral-prototype}.
In some cases, such as the $\beta$-sieve weight's upper and lower density functions~$f_\kappa, F_\kappa$~\cite[Chapter~12]{FriedlanderIwaniec2010} (see also~\cite{Mounier2025} for another example involving integers free of large prime divisors), the structure of such integrals can be used to provide the desired quantities as solutions of differential-delay equations, which can be solved numerically to high precision~\cite{MarsagliaEtAl1989}, however in most other cases, and in particular in works using the Harman sieve weights, there is no clear iterative structure to exploit.

The aim of the present paper is to propose a way to compute integrals of the shape~$I(P)$, which is:
\begin{itemize}
  \item Rigorous: it relies on interval arithmetic and the \LattE{} software for exact integration of rational polynomials on rational polytopes~\cite{BaldoniEtAl2011,DeLoeraEtAl2011,DeLoeraEtAl2013},
  \item Polynomial-time in bit precision: we will show that it executes in time~$O(n^C)$ where~$n$ is the desired precision in bits,
  \item Open-source: it is provided as a Python script on GitHub~\cite{github-sieve-integral} using Sage libraries~\cite{SageMath}.
\end{itemize}

\begin{theorem}\label{th:poly-complexity}
  Given~$P\subset\R_{>0}^k$ a fixed polytope with equations defined over~$\Q$, the integral~$I(P)$ can be approximated with precision~$2^{-n}$ in time~$O(n^{O(1)})$.
\end{theorem}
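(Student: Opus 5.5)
We approximate $1/(t_1\dotsb t_k)$ on $P$ by polynomials with rational coefficients, integrate those polynomials exactly with \LattE{}, and bound the error rigorously. Since $P$ is a fixed compact polytope in $\R_{>0}^k$, there are rationals $0<a<b$ with $P\subset[a,b]^k$. Near the variable $t_j$, write $1/t_j = \frac{1}{c}\cdot\frac{1}{1-(1-t_j/c)}$ with $c=b$. On $[a,b]$ we have $0\le u_j:=1-t_j/b\le 1-a/b=:\rho<1$, so the truncated geometric series
$$ \frac{1}{t_j} = \frac1b\sum_{m=0}^{M-1} u_j^m + \frac{u_j^M}{t_j} $$
has remainder bounded by $\rho^M/a$. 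The summands are positive, so the truncation is a lower bound. Taking the product over $j$ gives a polynomial $Q_M(t)$ with rational coefficients satisfying
$$0\le \frac1{t_1\dotsb t_k}-Q_M(t)\le \frac{k\rho^M}{a^{k}}$$
on $P$. This follows by a telescoping argument, using that each truncated factor lies in $[0,1/a]$. Thus $\int_P Q_M\le I(P)\le \int_P Q_M + k\rho^M a^{-k}\operatorname{vol}(P)$. Choosing $M = \lceil C_P\, n\rceil$ with $C_P$ depending only on $a,b,k,P$ makes the error at most $2^{-n}$.

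If $a/b$ is tiny, $\rho$ is close to $1$ and the constant $C_P$ is large, but it is still a constant. Alternatively, we can first subdivide $P$ into finitely many pieces $P\cap\prod_j[a_i,a_{i+1}]$, each of which is again a rational polytope, using a fixed geometric grid with ratio $2$. On each piece we expand around its upper corner, so that $\rho\le 1/2$. Since $P$ is fixed, this costs only an $O(1)$ factor.

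The main step, and the expected obstacle, is to bound the cost of computing $\int_P Q_M$ exactly. We plan to use the known complexity result underlying \LattE{} (Baldoni--Berline--De Loera--Köppe--Vergne): for fixed dimension $k$, the integral of a polynomial over a rational polytope can be computed exactly in time polynomial in the input size, in the degree $D$ of the polynomial, and in the bit sizes of its coefficients. This holds in particular for powers of linear forms, via Brion-type formulas over simplicial cones. Here $\deg Q_M\le kM=O(n)$. Each factor $\sum_{m<M}(1-t_j/b)^m$ expands into monomials $t_j^e$ with rational coefficients of bit size $O(M\log M)$, so $Q_M$ has $O(M^k)=O(n^k)$ monomials with coefficients of bit size $O(n\log n)$. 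Integrating monomial by monomial, or by decomposing into powers of linear forms, we obtain a rational number whose numerator and denominator have $n^{O(1)}$ bits, in time $n^{O(1)}$. We must check that the cited bound is polynomial in the degree $D$ itself and not in $\binom{D+k}{k}$ in a way that breaks down; since $k$ is fixed, $\binom{D+k}{k}=O(n^k)$ anyway, so either form of the bound suffices.

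Finally, the exact rational value $\int_P Q_M$ together with the explicit error bound gives an interval of width at most $2^{-n}$ containing $I(P)$. Rounding its endpoints to $n+1$ bits costs polynomial time. The total running time is therefore $O(n^{O(1)})$, with implied constants depending on $P$.
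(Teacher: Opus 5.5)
Your proposal is correct and follows essentially the paper's argument: split $P$ into $O(1)$ balanced pieces (optional, since $P$ is fixed), replace $1/(t_1\dotsb t_k)$ on each piece by a truncated expansion of degree $O(n)$ with an explicit geometric tail bound, and integrate the resulting rational polynomial exactly with \LattE{}, whose cost is polynomial in the degree for fixed $k$ and $P$. The differences are cosmetic. You expand each $1/t_j$ about the upper end of its range and truncate factor by factor, which gives a one-sided, nonnegative error with contraction ratio $1/2$ on dyadic boxes. The paper instead expands about the centre and truncates by total degree (Lemma~\ref{lem:taylor-expansion}), which gives a two-sided error with the slightly better ratio $\varrho=(\e^\eta-1)/(\e^\eta+1)$, equal to $1/3$ for $\eta=\log 2$.
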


Besides this result, which is mainly of theoretical interest, we have implemented and tested the method on several examples of the sieve literature.

\subsection{Density of integers with a localized divisor}

We also applied our method to provide a numerical computation of the asymptotic density of the set of integers having a localized divisor,
\begin{equation}
  \begin{cases} h(\alpha, \beta) := \lim_{x\to \infty} \frac{H(x, x^\alpha, x^\beta)}x , \\ H(x, y, z) := \card\{n\in [1, x], \exists d\mid n, d\in [y, z]\}, \end{cases} \qquad (0\leq \alpha \leq \beta \leq 1).\label{eq:def-hab}
\end{equation}
For the existence of~$h(\alpha, \beta)$ we refer to~\cite{Tenenbaum1980}.
The asymptotic behaviour of~$H(x, y, z)$ is a recurring topic in multiplicative and probabilistic number theory.
A famous problem of Erd\H{o}s, known as the multiplication table problem, asks for the asymptotic behaviour of~$A(x) := H(x, \sqrt{x}, 2\sqrt{x})$, and has been the topic of landmark contributions by Erd\H{o}s~\cite{Erdos1960}, Tenenbaum~\cite{Tenenbaum1984}, Ford~\cite{Ford2008}, and very recently Green and Sawhney~\cite{GreenSawhney2026} who have announced an asymptotic equivalent for~$A(x)$.

Here we are concerned with the numerical estimation of~$h(\alpha, \beta)$ for fixed~$\alpha<\beta$.
Tenenbaum~\cite[pp.~31--33]{Tenenbaum1980} showed that~$h(\alpha, \beta)$ can be approximated by a finite sums of integrals of the shape~\eqref{eq:integral-prototype}. The complexity of these integrals (their dimensions, notably) grows as~$\beta-\alpha$ decreases.
In the special case~$\beta = 1/2$ and~$\alpha \leq 2/5$, it is proven in~\cite[Théorème~B.(i)]{Tenenbaum1980} that
$$ h(\alpha, 1/2) = 1 - \log\frac1{1-\alpha} - I(P_\alpha), $$
where
\[ P_\alpha =
  \begin{cases}
    \varnothing & (\alpha \leq 1/3), \\
    \{(t_1, t_2, t_3)\in \R_{>0}, 1-\alpha - t_2 < t_1 < t_2 < t_3 < 1 - t_1 - t_2 - t_3\} & (1/3 \leq \alpha \leq 2/5).
  \end{cases} \]
The first case ($\alpha \leq 1/3$) is the only case when~$h(\alpha, \beta)$ possesses an explicit expression as sums of elementary functions. Note that this special case~$\beta=1/2$ relies on divisor symmetry, since~$h(\alpha, 1/2) = h(\alpha, 1-\alpha)$ (which doubles the effective difference $\beta-\alpha$).

Very recently, Haddad~\cite{Haddad2026} obtained an expression of~$h(\alpha, \beta)$ as a genuinely finite sum of integrals of the shape~\eqref{eq:integral-prototype} of dimension~$k = \lfloor 1/(\beta-\alpha)\rfloor$, which involves~$2^{2^k}$ polytopes.

We provide an expression of~$h(\alpha, \beta)$ which involves a reduced number of polytopes, so that these integrals become manageable for~$k\leq 7$.
Along with a rigorous bound on the contribution of large~$k$ contribution, this permits a numerical estimation of~$h(\alpha, \beta)$ in a wide range of parameters.

\begin{theorem}\label{th:hab-formula-vague}
  Let~$0<\alpha<\beta<1$ be fixed. The quantity~$h(\alpha, \beta)$ can be expressed as a sum
  \begin{equation}
    h(\alpha, \beta) = \sum_{0\leq k \leq 1/(\beta-\alpha)} \sum_{P \in \cP_k(\alpha, \beta)} J_{\alpha, \beta}(P),\label{eq:expr-h-polytopes-rough}
  \end{equation}
  where~$J_{\alpha, \beta}(P)$ is a~$k$-dimensional integral similar to~$I(P)$, and~$\card{\cP_k}\leq 2 n_k$, where
  \begin{equation}
    n_2 = 1, \qquad n_3 = 4, \qquad n_4 = 16, \qquad n_5 = 91, \qquad n_6 = 1053, \qquad n_7 = 43141.\label{eq:nk}
  \end{equation}
\end{theorem}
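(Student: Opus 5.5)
We work with the probabilistic model behind~\cite{Tenenbaum1980}. For a typical integer~$n\leq x$, the normalized logarithms~$\log p/\log n$ of its prime factors, taken in decreasing order, follow the Poisson--Dirichlet law. Up to a factor~$\omega(t)$-type weight coming from the smooth part, the density of configurations with prime logarithms~$t_1>\dots>t_k$ (together with a part having no prime factors above~$t_k$) is~$\prod \df t_i/t_i$.

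Whether~$n$ has a divisor in~$[n^\alpha,n^\beta]$ depends only on the multiset of subset sums of the~$t_i$. We split according to the number~$k$ of ``large'' prime factors, namely those with~$t_i>\beta-\alpha$, where~$\beta-\alpha$ is the interval length. The small primes (all~$<\beta-\alpha$ in log-scale) have a useful property: if their total mass~$s$ is large enough, their subset sums form a~$(\beta-\alpha)$-net of~$[0,s]$. Hence, given the large primes, the event ``some subset sum lies in~$[\alpha,\beta]$'' is equivalent to the following condition: for some subset~$S\subset\{1,\dots,k\}$ of large primes,~$[\sum_S t_i, \sum_S t_i + s]$ meets~$[\alpha,\beta]$, where~$s=1-\sum_{i\le k}t_i$.

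Each such event is a union of polytopes in~$t$. Integrating out the small-prime part yields~$J_{\alpha,\beta}(P)$, a~$k$-dimensional integral of~$\prod \df t_i/t_i$ times an explicit weight (a Buchstab/Dickman-type factor~$\omega$ or~$\rho$ evaluated at~$s/(\beta-\alpha)$-scaled variables). Since~$\sum t_i\le1$ and each~$t_i>\beta-\alpha$, we get~$k\le 1/(\beta-\alpha)$, which gives the range of the sum in~\eqref{eq:expr-h-polytopes-rough}.

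The steps, in order, are as follows.
\begin{enumerate}[label=(\roman*)]
\item Prove the covering lemma for small primes. This must be rigorous, with exact handling of the boundary case when~$s$ is small. That boundary case is where the factor~$2$ in~$\card\cP_k\le 2n_k$ enters: one family of polytopes for ``enough small mass'' and one complementary family.
\item For ordered~$t_1>\dots>t_k$, encode the event as a Boolean combination of the linear conditions~$\sum_S t_i\in[\alpha-s,\beta]$.
\item Reduce the Boolean combination. The naive count is one polytope per sign pattern on~$2^k$ subsets, i.e.~$2^{2^k}$, which is Haddad's count. Instead we use an inclusion-free disjoint decomposition, defined by the ordering of subset sums relative to~$\alpha,\beta$.
\end{enumerate}

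The key reduction is in step (iii). Since the~$t_i$ are ordered, the map~$S\mapsto\sum_S t_i$ is monotone for the Boolean-lattice order induced by ``replace an index by a smaller one''. The event ``no subset sum in the window'' is therefore determined by an antichain-type threshold, namely which subsets have sum~$<\alpha-s$ and which have sum~$>\beta$. These data are down-sets/up-sets in the dominance poset on subsets, i.e. threshold-like families. Counting the down-sets realizable by linear orders of subset sums gives numbers~$n_k$. We would verify~$n_2,\dots,n_7=1,4,16,91,1053,43141$ by a computer enumeration of realizable threshold families (feasibility of each via LP). Each feasible family gives one polytope, and the complement contributes~$1-$ that mass, which explains the factor~$2$.

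The main obstacle is step (iii). We must show that distinct feasible families give disjoint polytopes whose union covers the event exactly, and that the enumeration is complete. Concretely, the plan is to characterize feasible families as those compatible with the dominance order and realizable by positive weights, and then to rely on the exhaustive LP enumeration for the stated~$n_k$.
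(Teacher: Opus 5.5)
Your two structural ingredients are the paper's. The chain argument for primes $\le x^{\beta-\alpha}$ is Lemma~\ref{lem:jump}. The reduction of the $2^{2^k}$-term expansion to Boolean functions increasing for the dominance order is Proposition~\ref{prop:char-fun}. The count is likewise a machine check of nonemptiness of polytopes in $(\alpha,\beta,t)$-space.

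The gap is in how you justify $\card\cP_k\le 2n_k$, which is the real content of the statement. First, the covering lemma has no boundary case. Whatever the size of $n_1$, the products $d_2, p_1d_2,\dots,n_1d_2$ grow by factors $\le x^{\beta-\alpha}$, so $\deltabar(n;[\alpha,\beta])=\deltabar(n_2;[\alpha-\frac{\log n_1}{\log x},\beta])$ holds unconditionally. Second, writing the answer as $1$ minus a mass produces one constant term, not a second family of polytopes. So neither of your explanations of the factor $2$ is correct. By contrast, your ``main obstacle'' in step (iii) is immediate. Expanding $\prod_J\bigl(\1(\sum_J t_j<\alpha')+\1(\sum_J t_j>\beta)\bigr)$ already gives disjoint indicators indexed by $F\in\cB_k$, and partial summation kills the non-regular ones. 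One then only needs an emptiness test for each $F\in\cF_k$, not a characterization of realizable families.

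The factor $2$ and the specific $n_k$ come from the paper's three-way split $n=n_1n_2n_3$, where $n_3$ collects the primes $>x^\beta$:
the part with $n_1n_2\le x^\alpha$ is evaluated in closed form, giving $1-B(\frac1\beta+1)+B(\frac{1-\alpha}\beta+1)$;
the part with $n_3>1$ gives the family $\cP_k$, with $n_3$ integrated out as one variable $s$ (its mass) carrying the weight $B(s/\beta)/s$;
the part with $n_3=1$ gives the family $\cP'_k$.
Each family has at most $n_k$ members. Here $n_k$ is the number of $F\in\cF_k$ for which the constraints $\beta>t_1>\dots>t_k>\beta-\alpha$, $1-\alpha<s+\sum_i t_i<1$ (with $s=0$ for $\cP'_k$), and the $F$-conditions are jointly feasible for some $0<\alpha<\beta<1$.

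In your setup, every prime above $x^{\beta-\alpha}$, including those above $x^\beta$, is a coordinate, and your $s=1-\sum_i t_i$ is the mass of $n_1$. So your polytopes are different, and your enumeration will not return $1,4,16,91,1053,43141$. Already for $k=2$, the three regular $F$ with $(F(01),F(10))=(0,0),(0,1),(1,1)$ all give nonempty polytopes. The last two have $t_1>\beta$ and $n_1n_2<x^\alpha$; the paper absorbs them into the closed-form term, and only the first survives there.

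To close the gap, either adopt the paper's split, or bound your own family directly:
constant $F$ give empty polytopes, so you have at most $\card\cF_k-2$ of them, which is $\le 2n_k$ for $3\le k\le 7$;
for $k=2$, the cases $(0,0)$ and $(1,1)$ force $\alpha>1/2$ and $\beta<1/2$ respectively, so at most two coexist for given $(\alpha,\beta)$.
You should also replace the Poisson--Dirichlet heuristic by actual asymptotics, namely Mertens-type estimates for the prime sums and Dickman's theorem for the smooth part, applied to the exact identity of the chain lemma.
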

The main point is the size of~$n_k$, which makes this formula useable in practice.
The precise statement is given in Theorem~\ref{th:expr-h} below, and includes an effective bound on the contribution of large~$k$.

We used this formula and implemented our method of computation of~$I(P)$ to compute~$h(\alpha, \beta)$ for all values of~$\alpha$, $\beta$ along a grid of step~$1/100$ for which~$\beta - \alpha \geq 0.02$.
The approximate surface~$(\alpha, \beta, h(\alpha, \beta))$ obtained is represented in Figure~\ref{fig:h-values}.
An interactive 3D plot representing an upper (blue) and lower (red) bound for~$h(\alpha, \beta)$ can be found in the HTML ancillary file in preprint repositories (HAL, arXiv) of this manuscript.

\begin{figure}[h!]
  \centering
  \begin{subfigure}[t]{.5\textwidth}
    \centering
    \image{.7}{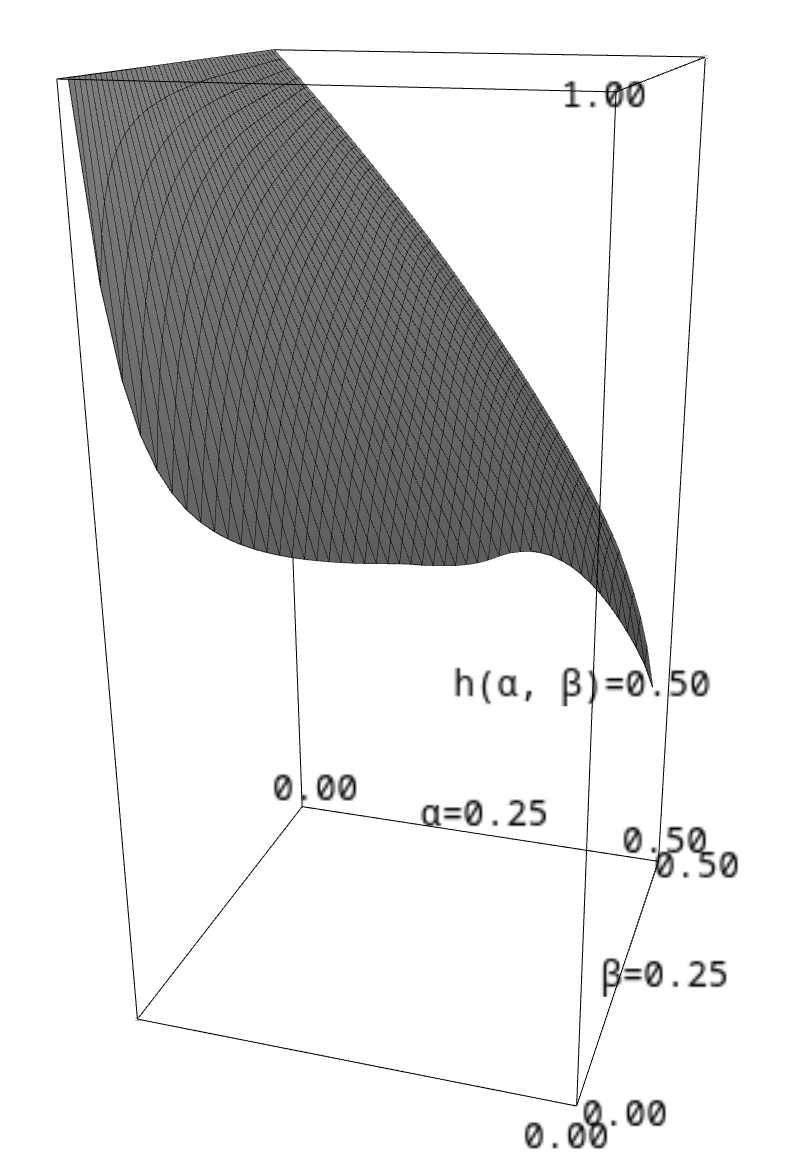}
  \end{subfigure}%
  ~
  \begin{subfigure}[t]{.5\textwidth}
    \centering
    \image{.8}{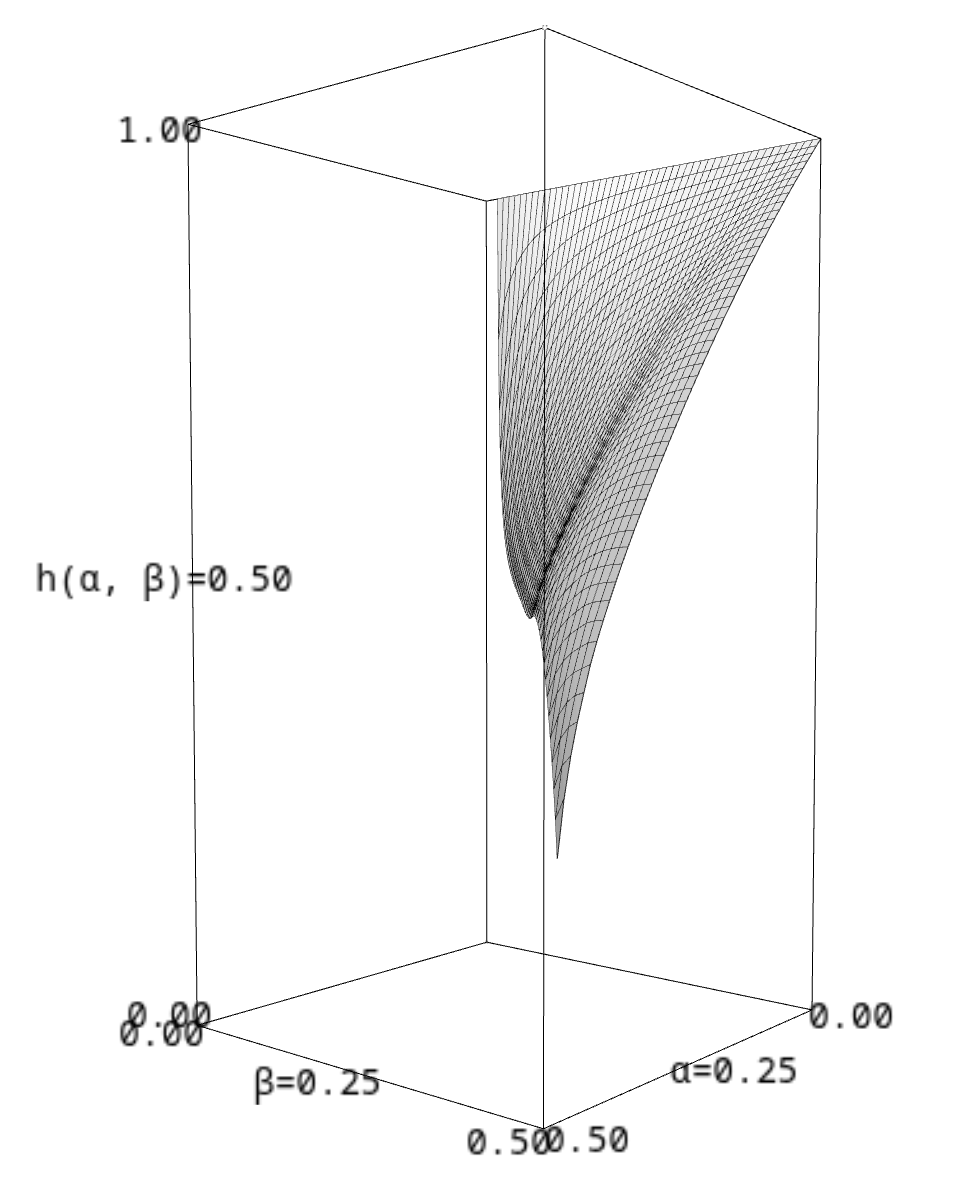}
  \end{subfigure}
  \caption{Approximate values of~$h(\alpha, \beta)$. A grid in~$(\alpha,\beta)$ of step~$1/100$ is projected onto the surface.}
  \label{fig:h-values}
\end{figure}

In the paper~\cite{Haddad2026}, Haddad also obtains an effective estimate comparing~$h(\alpha, \beta)$ with~$H(x, x^\alpha, x^\beta)$, which together with Ford's theorem~\cite{Ford2008} implies
$$ h(\alpha, \beta) \asymp (\beta - \alpha)^{\delta} (\log\tfrac1{\beta - \alpha})^{-3/2}, \qquad \delta = 1 - \frac{1 + \log\log 2}{\log 2} $$
for~$\alpha \gg 1$. Comparing with our graph, we note that even though our points~$(\alpha, \beta)$ seem relatively close to the diagonal, the values of~$h(\alpha, \beta)$ are still far from their value~$0$ \emph{at} the diagonal, due to the very small size of the exponent~$\delta = 0.086...$.

We provide more information on this computation in Section~\ref{sec:numer-estim-hab} below.

\medskip{}

To illustrate this formula we give a numerical approximation to the leading constant in the following result. For all~$n\geq 1$ let~$d^*(n)$ be least divisor of~$n$ larger than~$\sqrt{n}$,
$$ d^*(n) := \min\{d\mid n, d\geq \sqrt{n}\}. $$
In the paper~\cite{HaddadKoukoulopoulos2025}, Haddad and Koukoulopoulos obtained, as a simple corollary of their result on the optimal rate of convergence in Billingsley's theorem, the estimate\footnote{Note that it is the error term~$O(x)$ in~\eqref{eq:HK-dstar} which is the difficult part, and explains the relevance with the tools developed in~\cite{HaddadKoukoulopoulos2025}.}
\begin{equation}
  \sum_{n\leq x} \log d^*(n) = c x \log x + O(x),\label{eq:HK-dstar}
\end{equation}
where~$c>0$ is the constant given by
$$ c = 1 - \int_0^{1/2} h(\alpha, \tfrac12) \df \alpha, $$
see the end of Section~2 in~\cite{Haddad2026}. Using the formula above we can bound~$c$ as follows.

\begin{corollary}\label{cor:mid-div}
  We have~$0.651 \leq c \leq 0.666$.
\end{corollary}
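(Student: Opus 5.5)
We bound $c = 1 - \int_0^{1/2} h(\alpha,\tfrac12)\,\df\alpha$ by integrating rigorous pointwise bounds on $h(\cdot,\tfrac12)$ over a partition of $[0,\tfrac12]$. The key structural fact is that $\alpha\mapsto h(\alpha,\tfrac12)$ is non-increasing, since the interval $[n^\alpha,n^{1/2}]$ shrinks as $\alpha$ grows. Hence for any grid $0=\alpha_0<\alpha_1<\dots<\alpha_N=\tfrac12$,
$$ \sum_{i} (\alpha_{i+1}-\alpha_i)\, h(\alpha_{i+1},\tfrac12) \le \int_0^{1/2} h(\alpha,\tfrac12)\,\df\alpha \le \sum_i (\alpha_{i+1}-\alpha_i)\, h(\alpha_i,\tfrac12). $$
It then suffices to have rigorous upper and lower bounds on $h(\alpha_i,\tfrac12)$ at the grid points.

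We obtain these bounds piece by piece.

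\emph{Region $\alpha\le 1/3$.} Here the closed form $h(\alpha,\tfrac12)=1-\log\frac1{1-\alpha}$ lets us integrate exactly: $\int_0^{1/3}\bigl(1+\log(1-\alpha)\bigr)\,\df\alpha$ is elementary. No discretisation is needed there.

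\emph{Region $1/3\le\alpha\le 2/5$.} We use $h(\alpha,\tfrac12)=1-\log\frac1{1-\alpha}-I(P_\alpha)$. The bounds on $I(P_\alpha)$ at grid points come from the rigorous LattE-based method behind Theorem~\ref{th:poly-complexity}. Since $I(P_\alpha)$ is monotone in $\alpha$ (the polytope grows with $\alpha$), one could alternatively integrate directly, but the monotone Riemann sum above suffices.

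\emph{Region $2/5\le\alpha\le 1/2$.} We use Theorem~\ref{th:hab-formula-vague} in its precise form, Theorem~\ref{th:expr-h}. Each $J_{\alpha,\beta}(P)$ for $k\le 7$ is evaluated with interval bounds, and the effective tail bound controls the large-$k$ terms. The divisor symmetry $h(\alpha,\tfrac12)=h(\alpha,1-\alpha)$ doubles the effective gap to $1-2\alpha$, which is what keeps $k$ small. For $\alpha$ very close to $\tfrac12$ (say $1-2\alpha<0.02$, beyond our computed range), we simply bound $0\le h\le h(\alpha_{\text{last}},\tfrac12)$ by monotonicity. The resulting loss is at most the width of that last interval times the last computed value.

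Finally we sum the contributions and check that they give $0.334\le\int_0^{1/2}h\le 0.349$, which is equivalent to the stated bounds $0.651\le c\le 0.666$.

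The main obstacle is the region near $\alpha=\tfrac12$, which combines two effects. First, the Riemann-sum gap $\sum(\alpha_{i+1}-\alpha_i)\bigl(h(\alpha_i)-h(\alpha_{i+1})\bigr)$ telescopes to at most $\max_i(\alpha_{i+1}-\alpha_i)\cdot h(0,\tfrac12)$, so a grid of step $1/100$ already costs about $0.01$. Second, $h(\alpha,\tfrac12)$ decays only like $(1-2\alpha)^{0.086}$ near $\tfrac12$, so the uncomputed sliver still has non-negligible height. The width of the final interval $[0.651,0.666]$ reflects exactly these two errors. I would first try the step-$1/100$ grid from Section~\ref{sec:numer-estim-hab}, refining near $\tfrac12$ if the upper bound overshoots.
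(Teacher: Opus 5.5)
Your route is genuinely different from the paper's. The paper never samples $\alpha$. After the symmetry $h(\alpha,\frac12)=h(\alpha,1-\alpha)$ and the substitution $\alpha=\frac{1-\eta}{2}$, it writes $c=\frac12\bigl(1+\int_0^1\hbar(\frac{1-\eta}{2},\frac{1+\eta}{2})\,\df\eta\bigr)$ and uses only $0\le\hbar\le 1$ for $\eta\le\eta^*=0.01$. For $\eta>\eta^*$ it integrates Theorem~\ref{th:expr-h} in $\eta$ term by term:
the $B$-terms give the closed form $c_1$;
the $\cP_k$-terms vanish identically ($c_2=0$, since $s<\frac{1+\eta}{2}$ forces $B(\frac{2s}{1+\eta})=0$);
the tail bound $E(\eta)$ at $k_0=8$ is integrated explicitly;
and the remaining $\rho$-integrals $c_2'$ (of size at most about $0.02$) are computed with $\eta$ as one more polytope coordinate. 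This last step works because the constraints of $P'_F$ are linear in $\alpha,\beta$. So the paper has no discretization error in $\alpha$, and the numerics only touch a small quantity. Your monotone Riemann-sum scheme is also rigorous, and it needs only the fact that $h(\cdot,\frac12)$ is non-increasing plus pointwise values. The price is an $O(\Delta)$ error on the whole function $h$ and an uncomputed sliver near $\frac12$.

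That trade-off matters, because as written your argument is a plan whose decisive numerical step is not assured. The lower bound is free: $h(\alpha,\frac12)\le 1-\log\frac{1}{1-\alpha}$ for all $\alpha$, so $c\ge 1-\frac12\log 2>0.653$. Everything therefore hinges on $c\le 0.666$, i.e.\ on certifying $\int_0^{1/2}R(\alpha)\,\df\alpha\le 0.666-(1-\frac12\log 2)\approx 0.0126$, where $R(\alpha):=1-\log\frac{1}{1-\alpha}-h(\alpha,\frac12)\ge 0$. The paper's discretization-free computation only reaches $c\le 0.6658$. Compared with the paper, your upper bound for $c$ also carries two extra losses:
the right-endpoint Riemann deficit on $[\frac13,0.49]$, of order $\Delta$ times the drop of $h$ there, which is at least $0.26$;
and the full sliver $\int_{0.49}^{1/2}h$, which can be up to $0.0033$.
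In addition, the pointwise tail losses are large near the diagonal. With $k_0=8$, $\min(E_1,E_2)$ is about $0.08$ at $1-2\alpha=0.02$ and about $0.17$ at $1-2\alpha=0.01$, while $h(\alpha,\frac12)\le 0.33$ there. So your fallback of refining near $\frac12$ cannot really shrink the sliver. Only the Riemann part can be reduced, by refining inside $[0.4,0.49]$ at substantial cost. Whether the step-$\frac1{100}$ computation clears the margin is therefore left open.

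Within your framework there is a cheap gain, the analogue of the paper's closed-form $c_1$. Integrate $1-\log\frac{1}{1-\alpha}$ exactly on all of $[0,\frac12]$, and apply monotone Riemann bounds only to $R$. Using $h(\alpha,\frac12)=h(\alpha,1-\alpha)$, $R(\alpha)$ is the density of $n$ with $P^+(n)<x^\alpha$ and no divisor in $[x^\alpha,x^{1-\alpha}]$. Hence $R$ is non-decreasing, and it rises by at most $\rho(2)=1-\log 2$, rather than by the drop $\approx 0.59$ of $h$ on $[\frac13,\frac12]$.

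Finally, the width of $[0.651,0.666]$ in the paper does not come from discretization. It comes from the trivial bound for $\eta\le 0.01$, the integrated tail, and the crude treatment of $k\ge 5$.
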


To prove this estimate, instead of simply sampling~$\alpha$, we take advantage of the fact that the inequalities of the polytopes~$P$ in~\eqref{eq:expr-h-polytopes-rough} involve linear combinations of~$\alpha, \beta$, so that the~$\alpha$-integral can be treated as an additional polytope integration dimension.

\subsection{Notations}

We use interchangeably the Landau and Vinogradov notation~$X = O(Y)$ or~$X \ll Y$ if~$\abs{X} \leq C \abs{Y}$ for some constant~$C$.
The notation~$X\asymp Y$ means that~$X \ll Y$ and~$Y \ll X$.
The number~$C$ may depend on various parameters (such as the dimension~$k$ or the polytope~$P$ in the time complexity estimates).
This will be clear from the context.
The letter~$p$ denotes a prime number. For a natural number~$n$, the notation~$P^+(n)$ and~$P^-(n)$ denote the largest and smallest prime divisor of~$n$, respectively, with the convention~$P^+(1)=1$ and~$P^-(1)=\infty$.

\section{Computing sieve integrals}\label{sec:comp-sieve-integr}

In this section, we fix~$k \geq 1$ and a convex polytope~$P\subset \R_{>0}^k$ defined by linear inequalities with coefficients in~$\Z$. We consider~$P$ to be fixed with coefficients of size~$O(1)$.
Our aim is to compute the integral
$$ \int_{t\in P} \frac{dt}{t_1 \dotsb t_k} $$
with a precision~$2^{-n}$. We focus on the time complexity with respect to~$n$ and~$k$.

\subsection{Overview}

The \LattE{} integration software~\cite{BaldoniEtAl2011,DeLoeraEtAl2011,DeLoeraEtAl2013} evaluates exactly integrals of the shape
$$ \int_P f(t) dt $$ 
for polynomials~$f$ with coefficients in~$\Q$. Assuming that~$f$ has degree~$m$ with coefficients of size~$m^{O(1)}$, all else being fixed, the time complexity is polynomial in~$m$.
We refer to~\cite{DeLoeraEtAl2013} for more details about this algorithm, and in particular its relation to the work of Brion~\cite{Brion1988}, Barvinok~\cite{Barvinok1991}, Lasserre~\cite{Lasserre1983} and~Lawrence~\cite{Lawrence1991}.

The method we propose is simply to insert a Taylor expansion of~$1/(t_1 \dotsb t_k)$, and execute termwise integration using \LattE{}.
For this method to be numerically efficient, it is necessary that the variables~$t_j$ vary over an interval~$[T_0, T_1]$ with~$T_1/T_0$  close to~$1$. 
\begin{definition}
  Given a convex polytope~$P\subset \R_{>0}^k$, and a number~$\eta>0$, we say that~$P$ is~$\eta$-balanced if~$t, t'\in P$ implies that~$\forall j$, $t_j/t_j' \leq \e^\eta$.
\end{definition}
In other words, this means that~$\log P$ is contained in a box parallel to the axes with edges lengths at most~$\eta$.
Thus our method decomposes into three steps:
\begin{enumerate}
  \item Split the polytope~$P$ into a union~$\cup_i P_i$ of~$\eta$-balanced polytopes.
  \item In each piece~$P_i$, Taylor expand~$1/(t_1\dotsb t_k) \approx f(t)$, where~$f$ is a polynomial, with a rigorous estimation on the error.
  \item Compute the integral~$\int_{P_i} f$ exactly using \LattE{}.
\end{enumerate}

The method depends on the choice of parameter~$\eta$ and decomposition~$\cup_i P_i$.

The method proposed here is not specific to the special shape of the integrand (here~$1/t_1 \dotsb t_k$), so long as one has a rigorous bound on the tail of its Taylor expansion (Lemma~\ref{lem:taylor-expansion} below).

\subsection{Description of the method}

\subsubsection{Splitting the polytope}\label{sec:polytope-splitting}

We first choose a parameter~$\eta>0$, and we look for a partition~$P = \cup P_i$ into~$\eta$-balanced polytopes.
We make the simple choice of taking~$P_i$ of the shape
\begin{equation}
  P \cap ([u_1, \e^\eta u_1]\times \dotsb \times [u_k, \e^\eta u_k]).\label{eq:Delta-splitting-part}
\end{equation}
The number of these~$P_i$ is~$O(\eta^{-k})$. We thus reduce the computation of~$I(P)$ to~$O(\eta^{-k})$ many computations of~$I(P_i)$ where~$P_i$ which is~$\eta$-balanced.

We note that with this method, the number of vertices of~$P_i$ may become exponential in~$k$ even if the original polytope~$P$ had few vertices to begin with.

\subsubsection{Taylor expansion}

Suppose next that~$P$ is~$\eta$-balanced. More precisely, write
$$ P \subset [c_1 - r_1, c_1 + r_1] \times \dotsc [c_k - r_k, c_k + r_k] $$
for numbers~$c_i, r_i > 0$ which, by hypothesis, we can pick subject to~$(c_j + r_j) / (c_j - r_j) \leq \e^\eta$, which is to say~$\frac{r_j}{c_j} \leq \varrho$ where
\begin{equation}
  \varrho := \frac{\e^\eta - 1}{\e^\eta + 1}.\label{eq:def-varrho}
\end{equation}

\begin{lemma}\label{lem:taylor-expansion}
  Let~$\eta>0$ and define~$\varrho$ as in~\eqref{eq:def-varrho}. Let~$P\subset \R_{>0}^k$ be~$\eta$-balanced.
  For all~$t = (t_j) \in P$, write~$u_i := (c_i - t_i) / c_i$, so that~$\abs{u_i} \leq r_i/c_i \leq \varrho$. Then we have
  $$ \frac1{t_1 \dotsc t_k} = \Big(\prod_{i=1}^k \frac1{c_i}\Big)\times \Big( \ssum{j_1, \dotsc, j_k \geq 0 \\ j_1 + \dotsb + j_k \leq N} \prod_{i=1}^k u_i^{j_i} + \eps\Big), $$
  where
  $$ \abs{\eps} \leq \varrho^N \sum_{0\leq j < k} \binom{N+k}{j} \Big(\frac{\varrho}{1-\varrho}\Big)^{k-j}. $$
\end{lemma}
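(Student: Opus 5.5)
The plan is to reduce to a one-variable geometric series in each coordinate, and then bound the truncation error by counting multi-indices.

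\textbf{Step 1: exact expansion.} By definition $t_i = c_i(1-u_i)$ with $\abs{u_i}\leq\varrho<1$. Hence
$$ \frac1{t_1\dotsb t_k} = \Big(\prod_{i}\frac1{c_i}\Big)\prod_{i=1}^k \frac1{1-u_i} = \Big(\prod_{i}\frac1{c_i}\Big)\sum_{j_1,\dotsc,j_k\geq0}\prod_i u_i^{j_i}, $$
and this series converges absolutely. So $\eps$ is exactly the tail over multi-indices with $j_1+\dotsb+j_k>N$. Using $\abs{u_i}\leq\varrho$, and the fact that there are $\binom{m+k-1}{k-1}$ multi-indices with sum $m$, we get
$$ \abs{\eps} \leq \sum_{m>N}\binom{m+k-1}{k-1}\varrho^m . $$

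\textbf{Step 2: closed form of the tail.} It remains to show that
$$ \sum_{m>N}\binom{m+k-1}{k-1}x^m = x^N\sum_{0\leq j<k}\binom{N+k}{j}\Big(\frac{x}{1-x}\Big)^{k-j} \qquad (0\leq x<1). $$
In fact I expect equality, not just an upper bound. Write $m+k-1 = (N+k)+(m-N-1)$ and apply Vandermonde's identity:
$$ \binom{m+k-1}{k-1}=\sum_{j=0}^{k-1}\binom{N+k}{j}\binom{m-N-1}{k-1-j}. $$
Put $l=m-N-1\geq0$ and swap the two sums (all terms are nonnegative). The inner sum is then
$$ \sum_{l\geq0}\binom{l}{k-1-j}x^{N+1+l} = x^{N+1}\,\frac{x^{k-1-j}}{(1-x)^{k-j}}, $$
by the standard generating function $\sum_{l}\binom{l}{r}x^l = x^r/(1-x)^{r+1}$. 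This equals $x^N\,(x/(1-x))^{k-j}$, and setting $x=\varrho$ gives the claimed bound.

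\textbf{Checks and main obstacle.} For $k=1$ the identity reduces to $x^{N+1}/(1-x)$, which confirms the indexing. The only delicate point is matching the precise shape of the stated bound, namely the indices $j<k$ and the binomial $\binom{N+k}{j}$. The Vandermonde split at $N+k$ is what produces exactly this shape. If the indexing turned out to be off by one, I would instead argue by induction on $k$, summing over $j_k$.
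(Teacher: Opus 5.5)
Your proof is correct and follows the same route as the paper. You expand each $1/(1-u_i)$ as a geometric series, bound $\abs{\eps}$ by the tail $\sum_{m>N}\binom{m+k-1}{k-1}\varrho^m$, and evaluate that tail exactly, obtaining equality with the stated bound just as the paper does. The only difference is how the closed form is obtained: the paper writes the tail as $\frac{1}{(k-1)!}F^{(k-1)}(\varrho)$ with $F(x)=x^{N+k}/(1-x)$ and applies the Leibniz rule, whereas you use Vandermonde's convolution split at $N+k$ together with $\sum_{l}\binom{l}{r}x^l = x^r/(1-x)^{r+1}$, which is the coefficientwise form of the same computation.
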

\begin{proof}
  The error term is bounded in absolute value by
  \[
    \abs{\sum_{j_1 + \dotsb + j_k > N} \prod_{i=1}^k u_i^k} \leq \sum_{\ell>N} \varrho^\ell \sum_{j_1 + \dotsb + j_k = \ell} 1
    = \sum_{\ell > N} \varrho^\ell \binom{\ell+k-1}{\ell}
    = \frac1{(k-1)!} F^{(k-1)}(\varrho),
  \]
  where~$F(\varrho) = \varrho^{N+k}/(1-\varrho)$. Thus
  $$ F^{(k-1)}(\varrho) = \sum_{j=0}^{k-1} \binom{k-1}{j} \times \frac{(N+k)!\varrho^{N+k-j}}{(N+k-j)!} \times \frac{(k-1-j)!}{(1-\varrho)^{k-j}} $$
  and the result follows after simplification of the factorials.
\end{proof}

We use this formula to provide a rigorous bound on the truncation error in the Taylor expansion.

\subsubsection{Computation of the main term}

Following the previous two steps, we are now to evaluate about~$O(\eta^{-k})$ many integrals of the shape
\begin{equation}
  \int_P f_{k,N}(t) \df t\label{eq:integral-polynomial}
\end{equation}
where~$f_{k,N}(t)$ is, after appropriate dilation and translation, the sum of the complete symmetric polynomials of degree~$\leq N$,
$$ f_{k,N}(t) = \sum_{0\leq m \leq N} h_m(t), \qquad h_m(t) = \ssum{j_1, \dotsc, j_k \geq 0 \\ j_1 + \dotsb + j_k = m} \prod_{i=1}^k t_i^{j_i}. $$
The \LattE{} software~\cite{BaldoniEtAl2011,DeLoeraEtAl2011,DeLoeraEtAl2013} computes the integral~\eqref{eq:integral-polynomial} exactly.
To achieve this, \LattE{} proceeds in three steps:
\begin{enumerate}[label=(\roman*)]
  \item it decomposes the polytope~$P$, either by simplicial triangulation, or as intersection of tangent cones,
  \item it decomposes the polynomial~$f$ into a linear combination of powers of linear forms,
  \item it integrates each power of linear form using a closed form formula.
\end{enumerate}

\subsubsection*{Step (i)}

Recall that we chose to partition our original polytope~$P$ into parts of shape given in~\eqref{eq:Delta-splitting-part}. In practice, the original polytope~$P$ is defined by a small number (linear in~$k$) of hyperplanes, and thus we call~\LattE{} on a polytope which looks like a hypercube cut out by a small number of hyperplanes.
On such polytopes, the splitting of~$P$ into simplexes will produce a splitting into~$\approx k!$ many small polytopes. By contrast, the tangent cone decomposition method requires~$O(\e^{O(k)})$ many terms. Although we mainly consider~$k$ to be fixed, this indicates that in practice the tangent cone decomposition performs better, and this is what we observed indeed in our later numerical computation.

\subsubsection*{Step (ii)}

Step~(ii) raises the question of the computation of the Waring rank of the polynomial~$h_m(t)$.
\LattE{} uses a monomial identity decomposition~\cite[eq.~(13)]{BaldoniEtAl2011} which gives an expression of~$h_m(t)$ as a sum of~$O(m^k)$ powers of linear forms for~$k$ fixed. 

We take the opportunity here to record the following decomposition, which involves a sum over~$O(m^{k-1})$ terms with rational coefficients and thus saves one factor of~$m$.

\begin{lemma}\label{lem:waring}
  For all~$m\geq 1$ and $0\leq r, a \leq n$, let~$L_{r, a}^{(n)}$ be the coefficients in the Lagrange polynomials
  $$ \sum_{r=0}^m L_{r, a}^{(m)} X^r = \prod_{\substack{0\leq b \leq m \\ b \neq a}} \frac{X - b}{a - b}. $$
  and for all~$0\leq a_1, \dotsc, a_{k-1} \leq m$ define
  \begin{equation}
    C_m(a_1, \dotsc, a_{k-1}) = \ssum{r_1, \dotsc, r_{k-1}\geq 0 \\ r_1 + \dotsb + r_{k-1} \leq m \\ r_k := m - r_1 - \dotsb - r_{k-1}} \binom{m}{r_1, \dotsc, r_k}^{-1} \prod_{j=1}^{k-1} L_{r_j, a_j}^{(m)}.\label{eq:def-Cn}
  \end{equation}
  For all~$n\geq 0$ and~$t\in \R^k$, we have
  \begin{equation}
    h_m(t) =  \sum_{0\leq a_1, \dotsc, a_{k-1} \leq m} C_m(a_1, \dotsc, a_{k-1}) (a_1 t_1 + \dotsb + a_{k-1} t_{k-1} + t_k)^m \label{eq:decomp-hm-waring}
  \end{equation}
\end{lemma}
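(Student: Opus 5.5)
Expand the right-hand side of \eqref{eq:decomp-hm-waring} by the multinomial theorem and identify coefficients monomial by monomial. For fixed $(a_1,\dotsc,a_{k-1})$,
$$ (a_1t_1+\dotsb+a_{k-1}t_{k-1}+t_k)^m = \ssum{s_1+\dotsb+s_k=m} \binom{m}{s_1,\dotsc,s_k} \Big(\prod_{j=1}^{k-1} a_j^{s_j}\Big) t_1^{s_1}\dotsb t_k^{s_k}. $$
So the coefficient of $t^s$ on the right-hand side is
$$ \binom{m}{s_1,\dotsc,s_k}\sum_{0\leq a_1,\dotsc,a_{k-1}\leq m} C_m(a_1,\dotsc,a_{k-1})\prod_{j=1}^{k-1} a_j^{s_j}. $$
We must show this equals $1$ for every $s$ with $\abs{s}=m$, since $h_m$ has all coefficients equal to $1$.

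Next, substitute the definition \eqref{eq:def-Cn} of $C_m$ and interchange the finite sums over $a$ and $r$. The sum over $a$ factors over $j$ into
$$ \prod_{j=1}^{k-1}\Big(\sum_{a=0}^m L^{(m)}_{r_j,a}\, a^{s_j}\Big). $$
The key identity is the duality between Lagrange coefficients and power evaluations. The Lagrange basis polynomials $\ell_a(X)=\sum_r L^{(m)}_{r,a}X^r$ satisfy $\sum_{a=0}^m p(a)\ell_a(X)=p(X)$ for every polynomial $p$ of degree $\leq m$. Applying this to $p(X)=X^{s}$ with $0\leq s\leq m$ and comparing coefficients of $X^r$ gives
$$ \sum_{a=0}^m a^{s} L^{(m)}_{r,a} = \delta_{r,s}. $$
Here we use $0^0=1$, which is consistent with the $a_j=0$ terms of the multinomial expansion.

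Each $s_j\leq m$, so the inner sum equals $\delta_{r_j,s_j}$. Only the term $r_j=s_j$ for $j<k$ survives, and then $r_k=m-\sum_{j<k}s_j=s_k$ automatically. The coefficient therefore becomes
$$ \binom{m}{s_1,\dotsc,s_k}\binom{m}{s_1,\dotsc,s_k}^{-1}=1, $$
which is exactly the coefficient of $t^s$ in $h_m(t)$.

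The argument is purely algebraic, and no step is a serious obstacle. The one point needing care is the justification of the duality identity, which rests on uniqueness of interpolation at the $m+1$ nodes $0,\dotsc,m$. Its use requires that every exponent $s_j$ lies in $[0,m]$, and this holds because all exponents sum to $m$. The count $O(m^{k-1})$ of terms is then immediate from the range of $(a_1,\dotsc,a_{k-1})$.
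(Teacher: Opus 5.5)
Your proof is correct and is essentially the paper's argument. Both combine the multinomial expansion with the duality $\sum_{a=0}^m L^{(m)}_{r,a}a^\ell=\1(\ell=r)$, which you justify through Lagrange interpolation reproducing polynomials of degree $\leq m$, equivalent to the paper's Vandermonde inversion. The paper isolates $\binom{m}{r_1,\dotsc,r_k}t_1^{r_1}\dotsb t_k^{r_k}$ for each fixed $r$ and then sums over $r$, whereas you compare coefficients of $t^s$ directly; this is only a reordering of the same computation.
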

\begin{proof}
  We start with the equality
  \begin{equation}
    \sum_{a=0}^m L_{r, a}^{(m)} a^\ell = \1(\ell = r), \qquad (0 \leq \ell, r \leq m),\label{eq:vandermonde}
  \end{equation}
  which is obtained by inverting a Vandermonde matrix. Let~$r_j\geq 0$ satisfy~$r_1 + \dotsb + r_k = m$. Applying~\eqref{eq:vandermonde} at~$r = r_1, \dotsc, r_{k-1}$, we get
  $$ \sum_{0\leq a_1, \dotsc, a_{k-1} \leq m} \Big(\prod_{j=1}^{k-1} L^{(m)}_{r_j, a_j}\Big) (a_1 t_1 + \dotsb + a_{k-1} t_{k-1} + t_k)^m = \binom{m}{r_1, \dotsc, r_k} t_1^{r_1} \dotsb t_k^{r_k}. $$
  The statement follows upon summing over~$r_j$.
\end{proof}

The main difference between~\eqref{eq:decomp-hm-waring} and~\cite[eq.~(13)]{BaldoniEtAl2011} is explained as follows: in the expansion of the polynomials~$(t_1 + \dotsb + t_k)^m$ into monomials of the shape~$t_1^{j_1} \dotsb t_k^{j_k}$, we isolate a specific monomial~$t_1^{r_1} \dotsb t_k^{r_k}$ by detecting the conditions~$j_1 = r_1, \dotsc j_{k-1} = r_{k-1}$ among all exponents~$0\leq j_i \leq m$, which requires~$n^{k-1}$ sums; the equality~$j_k = r_k$ follows by homogeneity. The identity~\cite[eq.~(13)]{BaldoniEtAl2011} isolates the monomial~$t_1^{r_1} \dotsb t_k^{r_k}$ by detecting instead the conditions~$j_1 \leq r_1, \dotsc, j_k \leq r_k$, which involves~$r_1 \dotsb r_k$ sums. Equality everywhere follows by homogeneity. For a single monomial and small~$m$, the formula~\cite[eq.~(13)]{BaldoniEtAl2011} is much more efficient (it requires~$O(r_1 \dotsb r_k) = O(m + (m/k)^k)$ sums), but for the sum~$h_m(t)$, containing many monomials of degree~$m$, the formula~\eqref{eq:decomp-hm-waring} is more efficient in theory and recovers the optimal bound for single monomials~\cite[Proposition~3.1]{CarliniEtAl2012}.

We found, however, that in all applications we make of our method here, the values of~$m$ are small enough that the method of~\cite{BaldoniEtAl2011}, implemented efficiently in C by the \LattE{} software contributors, is very fast and requires only computations of binomial coefficients (leading terms of Lagrange polynomials) which are easier to implement. Therefore Lemma~\ref{lem:waring} was not implemented in our numerical tests. However it could prove useful in cases where~$m$ is large.

\subsubsection*{Step (iii)}

Finally, by~\cite{DeLoeraEtAl2011,DeLoeraEtAl2013} the \LattE{} computation is polynomial in~$N$ for fixed~$k$ and~$P$, and yields an exact result.

\subsection{Estimated time complexity}

We justify here the polynomial time complexity (Theorem~\ref{th:poly-complexity}). We consider~$k$ and~$P$ to be fixed and are looking for a result precise up to~$n$ bits, $n\geq 1$.

\begin{itemize}
  \item In the balancing step in Section~\ref{sec:polytope-splitting}, we reduce the computation to~$O(\eta^{-k})$ computations over balanced polytopes.
  \item In the Taylor approximation step, recalling the bound in Lemma~\ref{lem:taylor-expansion} and the approximation~$\varrho \asymp \eta$, we are to chose~$N \asymp n / \log(1/\eta) $ in order to guarantee~$n$ bits of precision.
  \item The \LattE{} computation for the degree~$N$ Taylor approximation executes in time~$O(N^C)$ for some~$C>0$.
\end{itemize}

The total computation time is therefore at most
\begin{equation}
  \ll \eta^{-k} \Big(\frac{n}{\log(1/\eta)}\Big)^{C}.\label{eq:comp-time-estimate-eta}
\end{equation}
For fixed~$k$ and varying~$n$, choosing~$\eta\gg 1$ proves the polynomial time complexity (Theorem~\ref{th:poly-complexity}).
We see from the expression~\eqref{eq:comp-time-estimate-eta} that it is not beneficial to choose~$\eta$ any smaller.

We have tested our method on a random polytope for which the coordinates vary within a factor at most~$5$. For a fixed~$P$ we tested several values of~$\eta$ and retrieved the average computation time (3 runs). The results for three polytopes of dimension~$3, 4$ and~$5$ respectively are displayed in Figure~\ref{fig:test-eta}. This illustrates the fact that any choice of~$\eta$ around~$0.4$ yields good results for these polytopes. This is in accordance with the numerical tests carried out in concrete cases of sieve theory which we will detail below.
One should keep in mind that the best value of~$\eta$ could vary with~$k$ or the geometry of~$P$.

\begin{figure}[h]
  \centering
  \begin{minipage}{0.33\textwidth}
    \centering
    \image{.9}{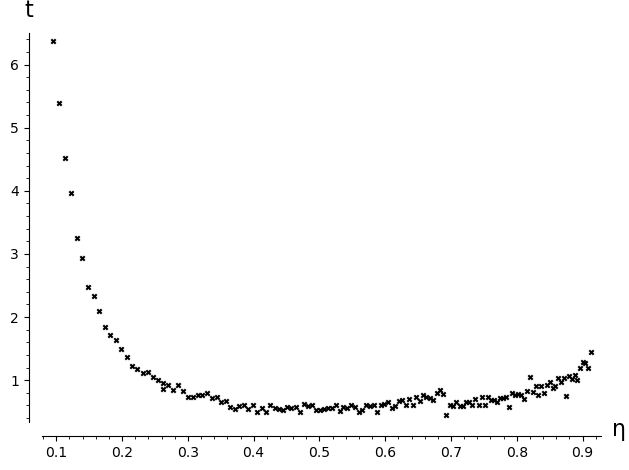}
  \end{minipage}%
  \begin{minipage}{0.33\textwidth}
    \centering
    \image{.9}{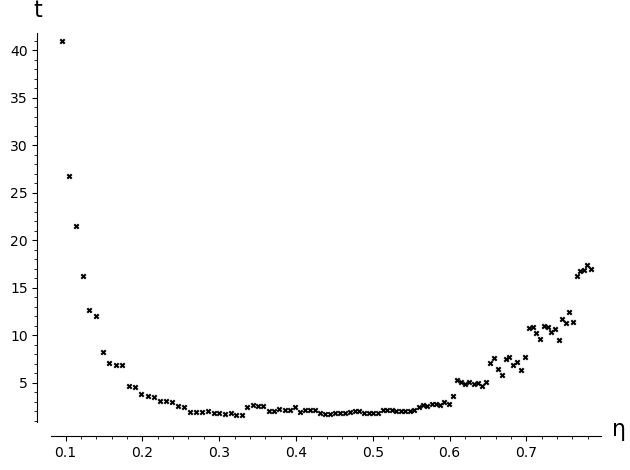}
  \end{minipage}%
  \begin{minipage}{0.33\textwidth}
    \centering
    \image{.9}{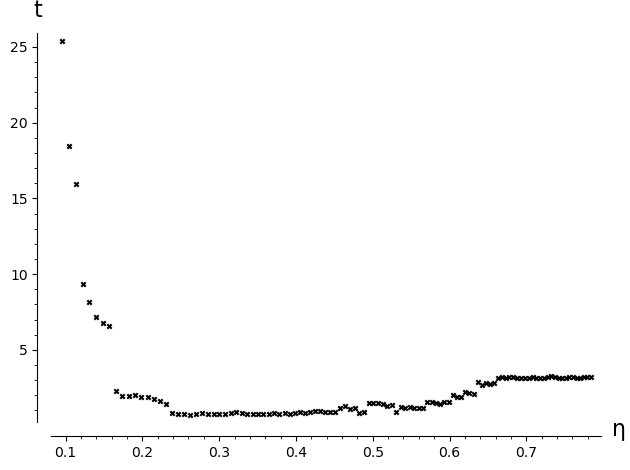}
  \end{minipage}
  \caption{Computation time (s) as~$\eta$ varies (dimension~$3$, $4$, $5$)}
  \label{fig:test-eta}
\end{figure}

\subsection{Variants}

\subsubsection{Integrals restricted to $\sum_j t_j = 1$}

In some cases, notably when counting almost-prime integers, the integral to be computed takes the shape
\begin{equation}
  I^*(P) = \int_{P \cap H_k} \frac{\df t_1 \dotsb \df t_{k-1}}{t_1 \dotsb t_k},\label{eq:def-I*}
\end{equation}
where
\begin{equation}
  \label{eq:def-Hk}
  H_k = \{t_1 + \dotsb + t_k = 1\}
\end{equation}
This is the case, for instance, in Ford-Maynard's recent work~\cite{FordMaynard2024} on prime-producing sieves.

The method we describe works equally well to compute~$I^*(P)$, expressing the variable~$t_k$ in the denominator in terms of the other variables. However in step~(i) (the polytope decomposition), one should be careful to balance the polytope along all~$k$ dimensions to ensure that the Taylor expansion is effective.

Another method, which is slower in practice but easier to implement, is to write
$$ I^*(P) = \int_{P \cap H_k} (t_1 + \dotsb + t_k) \frac{\df t_1 \dotsb \df t_{k-1}}{t_1 \dotsb t_k} =  \sum_{j=1}^k I(P_j), $$
where~$P_j\subset\R_{>0}^{k-1}$ is the projection of~$P$ on all axes except the~$j$-th. We used this alternate method to double-check our results.

\subsubsection{Integrals involving the Buchstab function}

The Buchstab function~\cite{Bukhshtab1937}~$\omega:\R_{>0} \to \R$ is the unique map continuous except at~$1$, differentiable outside of~$\{1, 2\}$ satisfying
\begin{align}
  & \omega(u) = 0, \qquad (0<u\leq 1), \notag \\
  & \omega(u) = 1/u, \qquad (1 < u \leq 2), \notag \\
  & (u\omega(u))' = \omega(u-1), && (u < 2). \label{eq:dde-omega}
\end{align}
This map occurs as the density of integers with large prime divisors. It is convenient to denote
\begin{equation}\label{eq:def-Omega}
  B(u) := u \omega(u), \qquad (u\in \R).
\end{equation}
For~$u>0$, we have
\begin{equation}
  \card\{ n\leq x, p\mid n \implies p > x^{1/u}\} \sim  B(u) \frac{x}{\log x}, \qquad (x\to \infty),\label{eq:Omega-asympt}
\end{equation}
see~\cite[Lemma~12.1]{FriedlanderIwaniec2010} or~\cite[Theorem~III.6.4]{Tenenbaum2015}.
As such, this map appears relatively frequently in inclusion-exclusion processes where some almost-prime number is left unconstrained.
An important case is Harman's alternative sieve, which has given some of the strongest results in problems on finding primes in various sets under type~I and type~II assumptions. We mention Dirichlet's theorem for prime denominators~\cite{Harman1983,Matomaeki2009}, or Chebyshev's problem on the largest prime divisor of~$n^2+1$~\cite{Merikoski2023,GrimmeltMerikoski2025}.

The main terms in Harman's alternate sieve method involve terms of the shape
\begin{equation}
  \int_{P\cap H_k} B\Big(\frac{\alpha_k}{\alpha_{k-1}}\Big) \frac{\df \alpha}{\alpha_1 \dotsb \alpha_k}\label{eq:Harman-type}
\end{equation}
where~$P\subset \R_{>0}^k$ is a polytope and~$H_k$ is the hyperplane defined in~\eqref{eq:def-Hk}, see~\cite[p.~58]{Harman2007}.
Estimating integrals of the shape~\eqref{eq:Harman-type} with our proposed method therefore raises the question of approximating the Buchstab function in the integrand in a way suitable to our method: either as a polynomial, or as integrals over polytopes.

\begin{enumerate}[label=(\alph*)]
  \item Polynomial approximations to the Buchstab function can be obtained very quickly by a method, devised in this context by Marsaglia-Zaman-Marsaglia~\cite{MarsagliaEtAl1989}, which consists in computing recursively Taylor expansions of~$B$ on intervals~$(n, n+1)$ ($n\in\N$), where it is real-analytic, using the delay-differential equation~\eqref{eq:dde-omega}. The case of Buchstab function itself is detailed in~\cite[section~7]{MarsagliaEtAl1989}.

  We found that an ad-hoc analysis of error terms in Taylor expansions of this kind, along with the Marsaglia-Zaman-Marsaglia method, is a practical and reasonably fast method for the computation of the integrals of type~\eqref{eq:Harman-type}.

  \item As an alternative route, a classical formula~\cite[p.~1241]{Bukhshtab1937} yields, for all~$u>0$, the expression
  \begin{equation}
    B(u) = \sum_{\ell\geq 0} I_\ell(u),\label{eq:expr-Omega-I}
  \end{equation}
  where~$I_0(u) = \1(u>1)$, and for all~$\ell\geq 1$,
  \begin{equation}\label{eq:def-I}
    I_\ell(u) = I(P_{\ell, u}),
    \qquad P_\ell := \Big\{t\in \R_{>0}^\ell, \begin{cases} t_\ell > 1, \\ \forall 1\leq j < \ell, t_j > t_{j+1} + 1, \\ t_1 < u-1. \end{cases} \Big\}.
  \end{equation}
  Note in particular that only~$\ell<u-1$ occur in the sum~\eqref{eq:expr-Omega-I}.
  By applying a dilation to the formula~\eqref{eq:def-I}, we write
  $$ I_\ell\Big(\frac{\alpha}{\beta}\Big) = \int_{P_\ell(\alpha, \beta)} \frac{\df t}{t_1 \dotsb t_\ell}, $$
  $$ P_\ell(\alpha, \beta) :=  \Big\{t\in \R_{>0}^\ell, \begin{cases} t_\ell > \beta, \\ \forall 1\leq j < \ell, t_j > t_{j+1} + \beta, \\ t_1 < \alpha - \beta. \end{cases} \Big\}. $$
  In this way, the integral~\eqref{eq:Harman-type} reduces to a finite number of integrals of the shape~$I^*(P)$ of growing dimension.
  This method is easy to implement, but the dimension growth makes it very slow in practice compared with the first method. We used it as a safety check to verify some of the numerical results (detailed below in Section~\ref{sec:exampl-using-harm} below) that were obtained using the first method.
\end{enumerate}

\subsubsection{Integrals involving the Dickman function}

The Dickman function~\cite{Dickman1930}~$\rho:\R_{>0} \to \R$ is the unique continuous map, differentiable everywhere except at~$1$, which satisfies
\begin{equation}\label{eq:def-rho}
  \begin{cases}
    \rho(u) = 1, & (0<u \leq 1), \\
    \rho'(u) = -\rho(u-1), & (u>1).
  \end{cases}
\end{equation}
It appears in number theory as the density of integers free of large prime divisors: for every~$u>0$, we have
\begin{equation}
  \card\{ n\leq x, p\mid n \implies p \leq x^{1/u}\} \sim  \rho(u) x, \qquad (x\to \infty),\label{eq:rho-asympt}
\end{equation}
see~\cite[Theorem~III.5.8]{Tenenbaum2015}.

In cases when the inclusion-exclusion principle is performed over integers which are not free of small prime divisors, one is often faced with a main term consisting of an integral of type~$I(P)$ weighted by the Dickman~$\rho$ function in the integrand. This will be the case for us when we study the density~\eqref{eq:def-hab} of integers with a localized divisor, see~\cite[Theorem~2]{Haddad2026}. We will study this case in detail in Section~\ref{sec:hab} below.

\medskip{}

As was the case for the Buchstab function, the iterative method of Marsaglia-Zaman-Marsaglia~\cite{MarsagliaEtAl1989} can be used to obtain Taylor approximations for~$\rho(u)$ very quickly. This method is used, for instance, in the SageMath~\cite{SageMath} implementation of the Dickman function by R.~Bradshaw. As for the Buchstab function, we found that this method produced satisfying results in our case.

\medskip{}

We also mention that, similarly to the Buchstab function, the Dickman function~$\rho$ can be expressed as a sum of nested integrals~\cite{Dickman1930},
\begin{equation}
  \label{eq:expr-rho-J}
  \rho(u) = \sum_{j=0}^\infty (-1)^\ell J_\ell(u),
\end{equation}
where~$J_0(u) = 1$ and for all~$\ell\geq 1$ and~$u>0$,
$$ J_\ell(u) = I(P_{\ell, u}), \qquad P_{\ell, u} = \{1 < t_k < \dotsb < t_1, t_1 + \dotsb + t_k < u\}. $$
However, contrarily to the Buchstab function expression~\eqref{eq:expr-Omega-I}, the individual integrals in~\eqref{eq:expr-rho-J} tend to grow with~$u$, whereas~$\rho(u)$ tends to~$0$ very quickly: the large amount of cancellation in the sum over~$\ell$ makes it challenging to use this formula in concrete cases.

\subsection{Examples and comparison with other methods}

We used our method in cases extracted from the literature of sieve theory.
In earlier works, the numerical estimation of sieve integrals was usually carried out in the following ways:
\begin{itemize}
  \item By dissecting the polytope along a~$k$-dimensional fine grid, and bounding the integrand in each box~\cite{Chen1973,Harman1983,Li2025,Merikoski2023},
  \item By using Monte-Carlo based methods; this seems to be the preferred method used by the Mathematica closed-source software~\cite{Mathematica}.
\end{itemize}
See~\cite[Chapter~5]{DavisRabinowitz1984} for a description of the state-of-the-art in both types of methods.

Both types of methods are typically badly behaved in terms of the precision. In particular, methods based on the Monte Carlo algorithm will reach~$n$ bits of precision in time~$O(2^{n/2})$. Their main advantage, however, is their polynomial dependence in terms of the dimension.
By contrast, the method described here behaves exponentially in the dimension~$k$, but polynomially in the precision.

All computations were made using four cores in parallel (AMD EPYC 7452 2.2GHz).

\subsubsection{Chen's work~\cite{Chen1973} on twin almost primes}\label{sec:chens-work}

Chen's celebrated work~\cite{Chen1973} showed the infinitude of primes~$p$ for which~$p+2$ is either prime or product of two primes, by proving a lower bound of the right order of magnitude for such primes.
The current best result in this direction is the recent work of R. Li~\cite{Li2026}.

We tested our algorithm on two sieve integrals arising in~\cite{Chen1973}.
The first one, in Lemma~8 \textit{ibid.}, is
$$ C_1 :=  I^*(P), \qquad P = \{\tfrac1{10} < t_1 < \tfrac13 < t_2 < t_3\} $$
where we recall the definition~\eqref{eq:def-I*}. We get $C_1 = 0.4909952010 \pm 1.43 \times 10^{-11}$ in about one second.
The computation time as a function of the requested number of bits, which is polynomial by Theorem~\ref{th:poly-complexity}, is represented below.
\begin{center}
  \begin{tikzpicture}
    \begin{axis}[
      xlabel={Requested precision (bits), average of three runs},
      ylabel={Time (s)},
      ymin=0,
      xmin=20, xmax=88,
      grid=both,
      width=12cm,
      height=5cm
      ]

      \addplot[
      only marks,
      mark=x,
      ] coordinates {
        (14, 0.23) (15, 0.23) (16, 0.25) (17, 0.25) (18, 0.26) (19, 0.26) (20, 0.27) (21, 0.29) (22, 0.3) (23, 0.3) (24, 0.32) (25, 0.34) (26, 0.37) (27, 0.38) (28, 0.39) (29, 0.43) (30, 0.52) (31, 0.53) (32, 0.58) (33, 0.61) (34, 0.69) (35, 0.71) (36, 0.76) (37, 0.81) (38, 0.84) (39, 0.9) (40, 1.03) (41, 1.07) (42, 1.19) (43, 1.34) (44, 1.4) (45, 1.66) (46, 1.85) (47, 1.81) (48, 2.11) (49, 2.31) (50, 2.35) (51, 2.71) (52, 2.99) (53, 3.14) (54, 3.53) (55, 3.89) (56, 4.04) (57, 4.53) (58, 5.08) (59, 5.42) (60, 6.02) (61, 6.39) (62, 6.93) (63, 7.72) (64, 8.14) (65, 8.88) (66, 9.87) (67, 10.46) (68, 11.88) (69, 12.42) (70, 13.2) (71, 14.9) (72, 15.49) (73, 17.09) (74, 18.69) (75, 19.24) (76, 21.03) (77, 23.11) (78, 23.9) (79, 26.26) (80, 28.46) (81, 29.37) (82, 32.31) (83, 34.79) (84, 37.5) (85, 39.69) (86, 42.58) (87, 46.16) (88, 48.16) (89, 51.89) (90, 56.08) (91, 58.6)
      };
    \end{axis}
  \end{tikzpicture}  
\end{center}

Note that this integral reduces to a single-dimensional integral of an elementary function~\cite[p.~326]{HalberstamRichert1974}, for which the double-exponential algorithm~\cite{TakahasiMori1974} is much faster, as can easily be checked \emph{e.g.} in PARI/GP~\cite{PARI-GP}.

The second computation is found on~\cite[p.~175]{Chen1973}, and reduces to
$$ C_2 := I(P_1) - \tfrac12 I(P_2) - \tfrac12 I(P_3), $$
where
\begin{align*}
  P_1 ={}& \{1 < t_3 < t_2-1 < t_1-2 < 2\}, \\
  P_2 ={}& \{1 < t_3 < t_2-1 < 3 - 10t_1 < 2\}, \\
  P_3 ={}& \{1 < t_3 < t_2-1 < 10t_1-2 < 2\}.
\end{align*}
We get~$C_2 = -0.0148863467 \pm 4.39\times 10^{-11}$ in about 4 seconds.

\subsubsection{Ford-Maynard's work~\cite{FordMaynard2024} on prime-producing sieves}

In Ford and Maynard's recent work~\cite{FordMaynard2024} on prime-producing sieves, several sieve integrals are computed in the context of the numerical estimation of criteria which determine whether certain type~I and type~II hypotheses on a sequence are enough to ensure the existence of primes in that sequence (see \emph{e.g.} Theorem~2.7 there). These integrals are called~$I_3, \dotsc, I_6$ in page~66 \textit{ibid.} We confirm rigorously their numerical computations up to the claimed digits, and for benchmark, we computed also~$I_5' = -3.77\times 10^{-8} \pm 5.14\times 10^{-11}$ and~$I_6 \in [-9.9996 \times 10^{-13}, -9.9994 \times 10^{-13}]$. We report below the requested precision and the computation time (in seconds, per integral).
\[
  \begin{array}{|l|c|c|c|c|c|}
    \hline
    \text{Integral} & I_3 & I_4 & I_5 & I_5' & I_6\\
    \hline
    \text{Requested precision (bits)} & 33 & 35 & 32 & 37 & 63 \\
    \hline
    \text{Number of non-empty integrals} & 2 & 3 & 6 & 4 & 1 \\
    \hline
    \text{Computation time per integral (s)} & 0.5 & 3.0 & 2.0 & 0.1 & 0.1 \\
    \hline
  \end{array}
\]

\subsubsection{Examples using the Harman sieve}\label{sec:exampl-using-harm}

Among the vast literature on the Harman's alternative sieve method, we chose to focus on the fairly recent works~\cite{Maynard2019}, \cite{Stadlmann2022} and~\cite{Merikoski2023}. All of these works make use of numerical computations to prove positivity of main terms arising from the use of Harman's sieve.

\bigskip{}

First we discuss the work of Maynard~\cite{Maynard2019} on primes with a missing digit in small numeration bases.
We recomputed for benchmark the integrals~$I_1, \dotsc, I_9$ from~\cite[p.~146]{Maynard2019}.
The integrals~$I_5$,~$I_6$ and~$I_9$ were the most demanding, which is expected from the large size of the argument in the Buchstab function, and the fact that the involved polytopes are not convex (which requires a preliminary splitting).
The overall computation time is about 50 seconds.
\[
  \begin{array}{|c|l|l|l|}
    \hline
    \text{Integral} & \text{Time (s)} & \text{Value} & \text{Upper-bound from~\cite{Maynard2019}} \\
    \hline
    I_1 & 0.3 & 0.02894233 \pm 2.8\times 10^{-9} & 0.02895 \\
    I_2 & 0.4 & 0.35717085 \pm 8.3\times 10^{-9} & 0.35718 \\
    I_3 & 0.4 & 0.0140114 \pm 7.2\times 10^{-8} & 0.01402 \\
    I_4 & 0.7 & 0.04237 \pm 4.3\times 10^{-6} & 0.04238 \\
    I_5 & 19.6 & 0.0552 \pm 6.5\times 10^{-5} & 0.05547 \\
    I_6 & 12.9 & 0.0659 \pm 8.6\times 10^{-5} & 0.06622 \\
    I_7 & 0.2 & 0.21878 \pm 5.9\times 10^{-6} & 0.21879 \\
    I_8 & 0.3 & 0.2023 \pm 3.7\times 10^{-5} & 0.20339 \\
    I_9 & 3.0 & 0.00918 \pm 7.6\times 10^{-6} & 0.00924 \\
    \hline
  \end{array}
\]
Next we compare the values of the integrals~$I_1$ and~$I_7$ computed by our method with those computed by the Mathematica software in~\cite{Maynard2019}, taking advantage of the fact that these values are recorded in the Mathematica code companion in the arXiv source files \emph{ibid}. Note that for these two values the argument of the Buchstab function is less than~$3$ and thus the computations from~\cite{Maynard2019} approximates the actual value of the integral, rather than an upper-bound.
\[
  \begin{array}{|c|l|c|}
    \hline
    \text{Integral} & \Bigl\{ \begin{array}{l} \text{value returned by Mathematica} \\ \text{value computed by the present method} \end{array}  & \text{difference} \\
    \hline
    I_1 & \Bigl\{\begin{array}{l} 0.02894232875|40233548637864210721 \\ 0.02894232875|3689385 \pm 3.5\times 10^{-19} \end{array} & 4 \times 10^{-13} \\
    I_7 & \Bigl\{\begin{array}{l} 0.218781293515|920652506327212130 \\ 0.218781293515|89336 \pm 4 \times 10^{-18} \end{array} & 3 \times 10^{-14} \\
    \hline
  \end{array}
\]
We indicated the first digit where the results differ. It would be interesting to compare the actual precision of the Mathematica results with the error estimate given by Mathematica.

\bigskip{}

We discuss next the work of Stadlmann~\cite{Stadlmann2022}, which studies the mean square gap between consecutive primes, using an involved analysis with the Harman sieve. We recomputed, for benchmark, fifty-seven of the various integrals in~\cite{Stadlmann2022}, in the same computational conditions as above. The total computation time was 2 minutes, on average about 2 seconds per integral.
Additionally, we estimated numerically the two $6$-dimensional integrals on \cite[p.~67]{Stadlmann2022}. They are listed below, along with the computational time.
\[
  \begin{array}{|l|l|l|l|l|}
    \hline
    \begin{array}{l} \text{Location} \\ \text{in~\cite{Stadlmann2022}} \end{array} &
    \text{Time (s)} &
    \text{Value} &
    \text{Bound from~\cite{Stadlmann2022}} \\
    \hline
    \text{p.~58} & 15.2 & 0.27674 \pm 1.3\times 10^{-4} & < 0.296 \\
    \text{p.~60} & 0.2 & 0.18626 \pm 3.6\times 10^{-6} & < 0.1993 \\
    \text{p.~65} & 0.2 & 0.15737 \pm 1.9\times 10^{-6} & < 0.1723 \\
    \text{p.~67} & 41.9 & 0.003 \pm 4.2 \times 10^{-4} & < 0.056 \\
    \text{p.~67} & 52.7 & 0.005 \pm 5.3 \times 10^{-4} & < 0.035 \\
    \text{p.~68} & 0.3 & 0.27893 \pm 5.3\times 10^{-6} & < 0.302 \\
    \text{p.~69} & 0.2 & 0.16554 \pm 4.3\times 10^{-6} & < 0.1769 \\
    \text{p.~69} & 0.2 & 0.11649 \pm 2.15\times 10^{-6} & < 0.1266 \\
    \text{p.~79} & 0.2 & 0.19179 \pm 2.41\times 10^{-6} & < 0.2102 \\
    \hline
  \end{array}
\]

\bigskip{}

Finally, we discuss the works of Merikoski and Grimmelt-Merikoski~\cite{Merikoski2023,GrimmeltMerikoski2025}, improved by Runbo Li~\cite{Li2024}, on the largest prime divisors of quadratic polynomials. We tested our algorithm to compute the integrals~$F_1, \dotsc, F_6$ from~\cite[Section~2.6]{Merikoski2023}, which were computed with a Python script in~\cite{Merikoski2023} and using Mathematica in~\cite{Li2024}. We requested~30 bits of precision in order to confirm the results from~\cite{Li2024}.
\[
  \begin{array}{|c|l|l|l|l|}
    \hline
    \text{Integral} & \text{Time (s)} & \text{Value} & \text{Bound from~\cite{Merikoski2023}} & \text{Bound from~\cite{Li2024}} \\
    \hline
    F_1 & 0.2 & 0.02861092 \pm 3.8\times 10^{-9} & <0.0287 & < 0.028611\\
    F_2 & 0.3 & 0.08606199 \pm 4.8\times 10^{-9}  & <0.08622 & < 0.086062 \\
    F_3 & 0.4 & 0.03099115 \pm 8.8\times 10^{-9} & <0.03107 & < 0.30992 \\
    F_4 & 0.6 & 6.095\times 10^{-5} \pm 7.8\times 10^{-9} & <1.1 \times 10^{-4} & < 1 \times 10^{-4}\\
    F_6 & 0.3 & 0.05986241 \pm 4.8 \times 10^{-9} & > 0.035631 & > 0.059841 \\
    \hline
  \end{array}
\]
We stress the small computation time (about 1.5 second overall).

\begin{remark}
  Our method is insensitive to the presence of a small-degree polynomial in the integrand: this allows to implement rather easily Richert's weighted sieve method~\cite[Chapter~25.3]{FriedlanderIwaniec2010}. See~\cite{MatomaekiZuniga-Alterman2025} for a recent work on this topic.
\end{remark}

\subsection{Source code}

The source code for our implementation of the method is publicly available on the GitHub repository~\cite{github-sieve-integral} under GNU General Public License 3. This repository also contains the notebooks pertaining to the computations described in Sections~\ref{sec:chens-work}--\ref{sec:exampl-using-harm}.

\section{Density of integers with a localized divisor}\label{sec:hab}

We apply our algorithm to the computation of the density
$$ h(\alpha, \beta) := \lim_{x\to\infty} \frac1x \card\{n\leq x, \exists d\mid n, x^\alpha < d < x^\beta\}. $$
This question is related to the celebrated multiplication problem of Erdős~\cite{Erdos1960}. In~\cite{Tenenbaum1980}, Tenenbaum studies this quantity and provides several bounds, and an explicit formula in some cases. One of the main observations in~\cite{Tenenbaum1980} is that the existence of a divisor~$d\mid n$ with~$x^\alpha < d < x^\beta$ depends essentially only on the large prime divisors of~$d$. In a recent work, Haddad obtained an explicit formula which, in particular, makes effective this computation: in~\cite[Theorem~2]{Haddad2026} it is proved that
\begin{equation}\label{eq:haddad-hab}
  h(\alpha, \beta) = 1 - \sum_P \int_P \rho\Big(\frac{1 - \sum_j t_j}{t_k}\Big) \frac{\df t}{t_1 \dotsb t_k}
\end{equation}
where~$\rho$ is the Dickman function, $k = \lfloor 1/(\beta - \alpha)\rfloor$ and the sum over~$P$ runs over a family of at most~$2^{2^k}$ polytopes in~$\R_{>0}^k$.

In this section, we provide another formula for~$h(\alpha, \beta)$, which is closely related to the above but comprises a reasonable number of terms, and can therefore be used for actual numerical computations. To state our results, it will be convenient to use the notation~$B(u)$ defined in~\eqref{eq:def-Omega}. We also recall the definition of the Dickman function~$\rho$ in~\eqref{eq:def-rho}.

\begin{theorem}\label{th:expr-h}
  For any~$0<\alpha<\beta<1$, there are families~$\cP_k$, $\cP'_k$ of convex polytopes in~$\R^k$ ($k\geq 2$) such that
  \begin{equation}\label{eq:formule_h}
    \begin{aligned}
      h(\alpha, \beta) ={}& 1 - B\Big(\frac1\beta + 1\Big) + B\Big(\frac{1-\alpha}\beta + 1\Big) \\
      & {} - \sum_{2\leq k < 1/(\beta-\alpha)} \Big( \sum_{P \in \cP_k} \int_P \rho\Big(\frac{1-\sum_j t_j - s}{\beta-\alpha}\Big) B\Big(\frac s\beta\Big) \frac{\df t \df s}{t_1 \dotsb t_k s} \\
      & \qquad \qquad \qquad + \sum_{P \in \cP'_k} \int_P \rho\Big(\frac{1-\sum_j t_j}{\beta-\alpha}\Big) \frac{\df t}{t_1 \dotsb t_k} \Big).
    \end{aligned}
  \end{equation}
  Moreover we have~$\card(\cP_k), \card(\cP'_k) \leq n_k$ where~$n_k$ is given in~\eqref{eq:nk}. The elements of~$\cP_k$ and~$\cP_k'$ are defined in~\eqref{eq:def-DeltaF} and \eqref{eq:def-DeltaFprime} below.
  
  Finally, for any~$k_0\geq 3$, the contribution of those~$k\geq k_0$ in the above sum is at most
  \begin{align}
    & \abs{\sum_{k\geq k_0}} \leq \min(E_1(k_0), E_2(k_0)), \quad \text{ with} \label{eq:maj_k_grand} \\
    & E_1(k_0) = \frac{\max\{0, \frac1{\beta-\alpha} - k_0\}^{k_0}}{(k_0!)^2}, \notag \\
    & E_2(k_0) = \e^\gamma \lambda \sum_{k\geq k_0} \frac{\log(1/\lambda)^k}{k!}, && (\lambda = \tfrac{(1-\alpha)(\beta-\alpha)}{\alpha}) \notag
  \end{align}
\end{theorem}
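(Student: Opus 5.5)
We decompose $n$ according to its large prime factors and reduce the question to a finite combinatorial condition on those primes, in the spirit of Tenenbaum~\cite{Tenenbaum1980} and Haddad~\cite{Haddad2026}. Fix $\delta := \beta - \alpha$. Write $n = p_1 p_2 \dotsb p_k \cdot s \cdot m$, where $p_1 > p_2 > \dotsb > p_k$ are the successive largest prime factors of size at least $x^{\delta}$. We then split on the next prime below: either it is smaller than $x^\delta$, or it lies in the range where the remaining cofactor $s$ is handled by a Buchstab-type count.

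The key observation is the following. Once the primes of $n$ below $x^{\delta}$ are present with enough total mass, they fill every gap of multiplicative length $x^\delta$: one can adjust a divisor by small primes greedily. Hence the existence of $d\mid n$ with $d \in [x^\alpha,x^\beta]$ depends only on two data. The first is the set of subset sums $\{\sum_{j\in S} t_j\}$, where $t_j = \log p_j/\log x$. The second is the total mass $u = 1-\sum_j t_j$ of the small part.

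More precisely, $n$ fails to have a localized divisor iff three things hold. First, no subset sum plus a suitable portion of the smooth part lies in $[\alpha,\beta]$. Second, in particular the smooth part has size below $x^{\delta}$ in a suitable sense. Third, the subset sums avoid a union of intervals. This gives the integrand $\rho\big((1-\sum t_j)/\delta\big)$, which is the density of integers of size $x^{1-\sum t_j}$ that are $x^{\delta}$-smooth. It comes with the Mertens weight $dt/(t_1\dotsb t_k)$, coming from~\cite[Lemma~9]{Tenenbaum1980}.

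There are two cases for the large prime factors.
\begin{itemize}
  \item If $n$ has a prime factor exceeding $x^{\beta}$, or a block of such factors, we treat it via $B$. Integers with a large cofactor free of primes below $x^\beta$ are counted by~\eqref{eq:Omega-asympt}. This produces the explicit main terms $1 - B(1/\beta+1) + B((1-\alpha)/\beta+1)$.
  \item In the mixed cases, it produces the factor $B(s/\beta)$ in the $\cP_k$ integrals.
\end{itemize}

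Since each $t_j \ge \delta$ and $\sum t_j < 1$, we get $k < 1/\delta$, which bounds the range of $k$.

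The families $\cP_k,\cP_k'$ are the cells of the hyperplane arrangement $\{\sum_{j\in S} t_j = \alpha \text{ or } \beta\}$ restricted to the ordered region $t_1>\dotsb>t_k>\delta$, keeping only cells on which no subset sum lies in $[\alpha,\beta]$. Each such cell is a convex polytope. To obtain $\card \cP_k \le n_k$, we merge adjacent cells sharing the same ``avoidance pattern'' whenever their union remains convex. Better still, we encode a cell by the antichain structure forced by the ordering $t_1>\dotsb>t_k$: if $\sum_{S} t_j<\alpha$, then the same holds for every $S'$ dominated by $S$ in the natural partial order. This restricts the data to monotone patterns, and we would enumerate these by computer to obtain the values~\eqref{eq:nk}.

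This enumeration, together with the verification that the merged regions are convex and exhaust the complement, is the step I expect to be the main obstacle. It is likely a rigorous computer check rather than a conceptual argument.

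For the tail bound~\eqref{eq:maj_k_grand}, we bound the $k$-th term trivially by $\rho\le1$ and $B\le 1$ (up to the corresponding normalization) on the region $t_1>\dotsb>t_k>\delta$, $\sum t_j<1$. Substituting $t_j=\delta+v_j$ gives volume at most $(1/\delta-k)^k/k!$ after scaling, and the ordering contributes another $1/k!$. Since $1/t_j\le 1/\delta$, this yields $E_1$, and the decay in $k$ allows us to keep only the first term.

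For $E_2$, we instead use that failing to have a localized divisor while having $k$ large primes in $[x^\delta,x^{\cdot}]$ is controlled by Mertens. The sum of $1/(p_1\dotsb p_k)$ over the relevant range is at most $(\log(1/\lambda))^k/k!$. The factor $\e^\gamma\lambda$ comes from the density $\prod_{p<z}(1-1/p)\sim \e^{-\gamma}/\log z$, which bounds the chance that the remaining part has no primes in a range of logarithmic ratio $\lambda$; concretely, from $\rho(u)\le 1/\Gamma(u+1)$-type bounds and the standard sieve upper bound $\e^{\gamma}\log y/\log z$. Summing over $k\ge k_0$ gives $E_2$.
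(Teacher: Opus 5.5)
Your ingredients are the right ones: greedy filling by $x^{\beta-\alpha}$-smooth primes, patterns monotone for the dominance order, computer enumeration, Buchstab for primes above $x^\beta$, Dickman for the smooth part. But the reduction is mis-stated in ways that would give a wrong formula.

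\textbf{The reduction and the polytopes.} The statement you need is Lemma~\ref{lem:jump}. Split $n=n_1n_2n_3$ at $x^{\beta-\alpha}$ and at $x^\beta$; grouping ``all primes $\ge x^{\delta}$'' together wrongly lumps in the primes above $x^\beta$. Then $\deltabar(n;[\alpha,\beta])=\deltabar(n_2;[\alpha-\nu,\beta])$ with $\nu=\log n_1/\log x$. Primes above $x^\beta$ are irrelevant because any divisor containing one exceeds $x^\beta$. Consequences:
\begin{itemize}
\item Your ``second condition'' is false. The smooth part need not be below $x^{\delta}$; it need only satisfy $\nu<\alpha-\max_{F(J)=0}\sum_J t_j$, and it can be much larger.
\item Your polytopes, cells avoiding $[\alpha,\beta]$, do not encode the shift by $\nu$. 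That shift is what produces the constraints $F(J)=0\Rightarrow\sum_{J^c}t_j>1-\alpha-s$ (resp.\ $>1-\alpha$) and $1-\alpha<s+\sum_jt_j<1$ in \eqref{eq:def-DeltaF}--\eqref{eq:def-DeltaFprime}. For $n_3>1$ it comes from integrating $\nu$ against the Dickman density with $s=1-\nu-\sum_jt_j$; for $n_3=1$, from noting that only $\nu\approx 1-\sum_jt_j$ survives the limit. Without it your formula overcounts.
\item The explicit terms $B(1/\beta+1)-B((1-\alpha)/\beta+1)$ come from the case $n_1n_2<x^\alpha$, where no divisor can lie in the interval. One counts $n=mn_3$ with $m<x^\alpha$, $P^+(m)\le x^\beta$, $P^-(n_3)>x^\beta$, and uses $B(s/\beta)/s=\frac{\df}{\df s}B(s/\beta+1)$. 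They do not come from ``having a prime factor above $x^\beta$'', which also happens in the $\cP_k$ terms.
\end{itemize}
Conversely, what you call the main obstacle is not one. Expanding $\prod_J\big(\1(\sum_Jt_j<\alpha-\nu)+\1(\sum_Jt_j>\beta)\big)$ gives exhaustion and disjointness for free. Each $P_F$ is an intersection of half-spaces, hence convex. The computer is only used to test which polytopes $\Pi_F$ in $(\alpha,\beta,t)$-space are nonempty, which is why $n_k<\card\cF_k$.

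\textbf{The tail bounds.} For $E_1$ there are two problems.
\begin{itemize}
\item $B$ is unbounded ($B(u)\sim \e^{-\gamma}u$), and $\int B(s/\beta)\,\df s/s$ can be of size $1/\beta$. So your per-$k$ estimate only works once $\rho$ and $B(s/\beta)/s$ are read jointly as a density $\le 1$.
\item More seriously, summing your per-$k$ bound $(1/\delta-k)^k/(k!)^2$ over $k\ge k_0$ gives $\sum_{k\ge k_0}E_1(k)$. A sum of positive terms is never bounded by its first term, so ``decay in $k$'' cannot yield $E_1(k_0)$.
\end{itemize}
The paper bounds the whole tail at once. It uses the density of $n$ with at least $k_0$ prime factors exceeding $x^{\beta-\alpha}$, which is $G_{k_0}(1/(\beta-\alpha)-k_0)$ in Knuth--Pardo's notation. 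It then proves $G_k(u)\le u^k/(k!)^2$ (Lemma~\ref{lem:Gk}) by induction from $G_k(u)=\int_1^{k+u}(G_{k-1}(t-k)-G_k(t-k-1))\,\df t/t$, dropping the negative term.

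For $E_2$ you give only a Poisson heuristic. The missing observation is that every prime factor of $n_2$ must lie below $x^\alpha$, since a prime in $[x^\alpha,x^\beta]$ is itself a localized divisor. The tail is therefore bounded by $\sum_{k\ge k_0}\psi_k(x,x^{\beta-\alpha},x^\alpha)$. The bound $\e^\gamma\lambda\sum_{k\ge k_0}\log(1/\lambda)^k/k!$ then comes from Tenenbaum's Théorème~C(iv), not from a generic sieve estimate you would still have to make precise.
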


The value~$n_7 = 43141$ should be compared with~$2^{2^7} \approx 10^{38}$. The formula~\eqref{eq:formule_h} makes the computation of~$h(\alpha, \beta)$ possible on mainstream laptop computers, at least as long as~$\beta-\alpha$ is not too close to~$0$; our numerical observations indicate a sharp increase in computational difficulty as~$\beta-\alpha$ approaches roughly~$1/15$.

\subsection{Monotone boolean functions for the dominant order}

One of the key steps in Theorem~\ref{th:expr-h} involves a combinatorial expansion of a characteristic function of the type
\begin{equation}
  \begin{aligned}
    &\1\Big( (t_1, \dots t_k) \text{ has no subsum in } [\alpha, \beta]\Big) \\
    ={}& \prod_{J\subset \{1, \dotsc, k\}} \Big(\1\big(\sum_{j\in J}t_j < \alpha\big) + \1\big(\sum_{j\in J}t_j > \beta\big)\Big).
  \end{aligned}\label{eq:subsum-prod-charfun}
\end{equation}
Expanding this product leads to a sum of~$2^{2^k}$ characteristic functions of the shape~$\1( t \in P)$ where~$t = (t_1, \dotsc, t_k)$ and~$P$ is a convex polytope, intersection of hyperplanes of the form~$\sum_{j\in J} t_j < \alpha$ or~$\sum_{j\in J}t_j > \beta$. This is essentially the reason for the bound~$2^{2^k}$ for the number of terms in the expression~\eqref{eq:haddad-hab} obtained in~\cite{Haddad2026}.

Our main observation is that many of these polytopes are actually empty, and in order to describe them in a practical way, it is convenient to introduce boolean functions.

\begin{definition}
  \begin{enumerate}[label=(\roman*)]
    \item Denote~$V_k := \{0, 1\}^k$ the boolean vectors on~$k$ components.
    \item Denote~$\cB_k = \{ F : \{0, 1\}^k \to \{0, 1\} \}$ the set of boolean functions on~$V_k$.
    \item The dominant order~$\leqslant$ on~$V_k$ is defined by
    $$ \forall x, y \in V_k, \qquad x\leqslant y \quad \iff \quad \forall \ell\leq k, \sum_{1\leq i \leq \ell} x_i \leq \sum_{1\leq i \leq \ell} y_i. $$
    \item We let~$\cF_k$ to be those boolean functions which are increasing for the dominant order.
  \end{enumerate}
\end{definition}

The elements of~$\cF_k$ are sometimes referred to as \emph{regular} boolean functions~\cite[ex.~110, p.~93]{Knuth2011}.

While~$\card \cB_k = 2^{2^k}$, the cardinality of~$\cF_k$ is much smaller. We record the values for~$k\leq 7$ below; this is sequence~A132183 in the Online Encyclopedia of Integer Sequences~\cite{OEIS}.
\begin{center}
  \begin{tabular}{|c|cccccccc|}
    \hline 
    $k$ & $0$ & $1$ &  $2$ &  $3$ &  $4$ &  $5$ &  $6$ &  $7$ \\
    \hline
    $\card(\cF_k)$ & $2$ & $3$ & $5$ & $10$ & $27$ & $119$ & $1173$ & $44315$ \\
    \hline
  \end{tabular}
\end{center}
The following key proposition relates the set~$\cF_k$ to the characteristic function mentioned above.

\begin{proposition}\label{prop:char-fun}
  Let~$\alpha < \beta$ and~$t \in \R^k$. Assume that~$t_1 > \dotsb > t_k> 0$. Then we have
  $$ \1\Big((t_1, \dots t_k) \text{ has no subsum in } [\alpha, \beta]\Big) = \sum_{F\in \cF_k} \1_{P_F}(t), $$
  where~$P_F=P_F(\alpha,\beta) \subset \R_{>0}^k$ is defined by
  $$ P_F(\alpha,\beta) = \Big\{t\in \R^k, t_1 > \dotsb > t_k>0, \forall x\in V_k, \begin{cases} F(x)=1\implies \sum_j x_j t_j > \beta \\ F(x)=0 \implies \sum_j x_j t_j < \alpha\end{cases} \Big\}. $$
  In particular, $P_F(\alpha,\beta)$ is empty whenever $\alpha\leq 0$
\end{proposition}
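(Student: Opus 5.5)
Fix $t$ with $t_1>\dotsb>t_k>0$ and expand the product \eqref{eq:subsum-prod-charfun}, indexing the $2^k$ factors by $x\in V_k$ (the subset $J=\{j: x_j=1\}$, with subsum $\langle x,t\rangle:=\sum_j x_jt_j$). Expanding the product over $x\in V_k$ gives a sum over all $F\in\cB_k$ of
$$\prod_{x:F(x)=1}\1(\langle x,t\rangle>\beta)\prod_{x:F(x)=0}\1(\langle x,t\rangle<\alpha).$$
We restrict to the region $t_1>\dotsb>t_k>0$, so each summand equals $\1_{P_F}(t)$ with $P_F$ defined as in the statement but for arbitrary $F\in\cB_k$. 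The two indicators for a given $x$ are mutually exclusive since $\alpha<\beta$, so at most one $F$ contributes for any given $t$. That $F$ is $F_t(x)=\1(\langle x,t\rangle>\beta)$, and it exists exactly when no subsum lies in $[\alpha,\beta]$. It therefore suffices to show that $P_F=\varnothing$ unless $F\in\cF_k$, which means showing that $F_t$ is increasing for the dominant order whenever $t$ has no subsum in $[\alpha,\beta]$.

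The key step is monotonicity of $x\mapsto\langle x,t\rangle$ for the dominant order when $t$ is decreasing and positive. Write $S_\ell(x)=\sum_{i\le\ell}x_i$ with $S_0=0$, and apply Abel summation:
$$\langle x,t\rangle=\sum_{\ell=1}^{k}S_\ell(x)(t_\ell-t_{\ell+1}),\qquad t_{k+1}:=0.$$
All coefficients $t_\ell-t_{\ell+1}$ are positive. So if $x\leqslant y$, i.e.\ $S_\ell(x)\le S_\ell(y)$ for all $\ell$, then $\langle x,t\rangle\le\langle y,t\rangle$.

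Now suppose $x\leqslant y$ and $F_t(x)=1$. Then $\langle y,t\rangle\ge\langle x,t\rangle>\beta$, so $F_t(y)=1$, and $F_t\in\cF_k$. Equivalently, if $F\notin\cF_k$ there exist $x\leqslant y$ with $F(x)=1$ and $F(y)=0$. Any $t\in P_F$ would then satisfy $\beta<\langle x,t\rangle\le\langle y,t\rangle<\alpha$, contradicting $\alpha<\beta$, so $P_F=\varnothing$. The identity follows.

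For the final claim, take $x=0$: then $\langle 0,t\rangle=0$. If $F(0)=0$, membership in $P_F$ requires $0<\alpha$, which fails when $\alpha\le 0$. If $F(0)=1$, it requires $0>\beta$; but $0$ is the minimum of the dominant order, so monotonicity forces $F\equiv1$, and this requirement fails as long as $\beta\ge 0$. The remaining case $\beta<0$ is excluded in context since $\beta>\alpha>0$; I would note it, or read ``$\alpha\le0$'' as covering the degenerate case where the empty subsum already lies in $[\alpha,\beta]$. The only real content is the Abel summation step; everything else is bookkeeping.
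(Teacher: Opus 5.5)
Your proof is correct and follows the paper's route. You expand the product over all $F\in\cB_k$, then use Abel summation to show that $x\mapsto\sum_j x_jt_j$ is increasing for the dominant order, which rules out every non-monotone $F$; your indexing with $t_{k+1}:=0$ is the right one, whereas the paper writes $t_{\ell-1}-t_\ell$.

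Your argument for the final claim via $x=0$ goes beyond the paper's proof, which does not address it. As you observe, that claim needs $\beta\ge 0$: for $\alpha<\beta<0$ the function $F\equiv1$ gives a nonempty $P_F$. The condition holds in the application because $\beta\in(0,1)$ is fixed there and only the lower endpoint is shifted to $\alpha-\log n_1/\log x$. So the justification is $\beta\in(0,1)$, not ``$\beta>\alpha>0$''.
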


\begin{proof}
  We start from formula~\eqref{eq:subsum-prod-charfun} and expand the product on the right-hand side. We order the resulting product by functions~$g:\cP(\{1, \dotsc, k\}) \to \{0, 1\}$ on subsets of~$\{1, \dotsc, k\}$, where for any subset~$J$, we have~$g(J) = 0$ or~$1$ according to whether we select the condition~$\sum_{j\in J} t_j < \alpha$ or~$\sum_{j\in J}t_j > \beta$. We get as a result
  \begin{align*}
    & \1\Big((t_1, \dots t_k) \text{ has no subsum in } [\alpha, \beta]\Big) \\
    &{} = \sum_g \1\Big(\forall J\subset \{1, \dotsc, k\}, \begin{cases} g(J) = 1 \implies \sum_{j\in J} t_j > \beta, \\ g(J) = 0 \implies \sum_{j\in J} t_j < \alpha. \end{cases}\Big).    
  \end{align*}
  We associate bijectively to every subset~$J \subset\{1, \dotsc, k\}$, an element~$x\in V_k$, by the relation~$x_i = \1(i \in J)$. Precomposing by this bijection, the maps~$g:\cP(\{1, \dotsc, k\}) \to \{0, 1\}$ correspond to function in~$\cB_k$, and therefore the expression above is
  $$ \sum_{F\in \cB_k} \1\Big(\forall x\in V_k, \begin{cases} F(x) = 1 \implies \sum_j x_j t_j > \beta \\ F(x) = 0 \implies \sum_j x_j t_j < \alpha \end{cases}\Big). $$
  Recall that~$t_1 > \dotsc > t_k > 0$ by assumption. It remains to justify that for each~$F\in \cB_k$, the summand is empty unless~$F\in \cF_k$. Let~$F$ be a function for which the indicator function is~$1$.
  Suppose~$x, y \in V_k$ are such that~$x \leq y$ for the dominant order, and let~$\delta_\ell := \sum_{1\leq i \leq \ell} (y_i - x_i)$ so that~$\delta_\ell \geq 0$ for all~$\ell$ by hypothesis. Then by partial summation
  $$ \sum_j (y_j - x_j) t_j = \sum_\ell \delta_\ell (t_{\ell-1} - t_\ell) \geq 0. $$
  Therefore~$\sum_j x_j t_j \leq \sum_j y_j t_j$. This excludes the possibility that~$F(x) = 1$ and~$F(y) = 0$ since otherwise we would have~$\sum_j x_j t_j > \beta \geq \alpha > \sum_j y_j t_j$. Therefore~$F(x) \leq F(y)$. This proves that~$F$ is monotonic for~$\leq$ as claimed.
\end{proof}

\subsection{Effective reduction to conditions on large primes}

In this section, we complete the proof of Theorem~\ref{th:expr-h}.
Given~$n\leq x$ and real numbers~$\alpha \leq \beta$, we denote
$$ \deltabar(n; [\alpha, \beta]) := \1(\forall d\mid n, d\not\in [x^\alpha, x^\beta]). $$
We omit the dependency on~$x$ in the notation.
Note that by definition
$$ h(\alpha, \beta) = 1 - \lim_{x\to\infty} \frac1x \sum_{n\leq x} \deltabar(n; [\alpha, \beta]). $$
Our key lemma is the following.
\begin{lemma}\label{lem:jump}
  Let~$n\geq 1$, and decompose~$n = n_1 n_2 n_3$ with
  \begin{equation}
    n_1 = \prod_{\substack{p^\nu \| n \\ p \leq x^{\beta-\alpha}}} p^\nu, \qquad n_2 = \prod_{\substack{p^\nu \| n \\ x^{\beta-\alpha} < p \leq x^\beta}} p^\nu, \qquad n_3 = \prod_{\substack{p^\nu \| n \\ x^\beta < p}} p^\nu.\label{eq:n-decomp-n1n2n3}
  \end{equation}
  Then we have
  $$ \deltabar(n; [\alpha, \beta]) = \deltabar(n_2; [\alpha-\tfrac{\log n_1}{\log x}, \beta]). $$
\end{lemma}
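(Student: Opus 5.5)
The plan is to prove the equivalence of the two conditions directly, by showing that $n$ has a divisor in $[x^\alpha, x^\beta]$ if and only if $n_2$ has a divisor in $[x^\alpha/n_1, x^\beta]$. By definition of $\deltabar$, the right-hand side of the lemma is the indicator that $n_2$ has no divisor $d$ with $x^{\alpha}/n_1 \leq d \leq x^\beta$, since $x^{\alpha - \log n_1/\log x} = x^\alpha/n_1$. So it suffices to prove the two implications between ``$n$ has a divisor in $[x^\alpha,x^\beta]$'' and ``$n_2$ has a divisor in $[x^\alpha/n_1,x^\beta]$''.

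\emph{Easy direction.} Suppose $d \mid n$ with $x^\alpha \leq d \leq x^\beta$.
\begin{enumerate}[label=(\alph*)]
  \item Every prime factor $p$ of $d$ satisfies $p \leq d \leq x^\beta$, so $d$ is coprime to $n_3$.
  \item Hence $d = d_1 d_2$ with $d_1 \mid n_1$ and $d_2 \mid n_2$.
  \item Then $d_2 \leq d \leq x^\beta$, and $d_2 = d/d_1 \geq x^\alpha/n_1$.
\end{enumerate}
So $d_2$ is a divisor of $n_2$ in the required interval.

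\emph{Converse: a discrete intermediate-value argument.} Suppose $d_2 \mid n_2$ with $x^\alpha/n_1 \leq d_2 \leq x^\beta$.
\begin{enumerate}[label=(\alph*)]
  \item If $d_2 \geq x^\alpha$, then $d_2$ itself is a divisor of $n$ in $[x^\alpha, x^\beta]$ and we are done.
  \item Otherwise, list the prime factors of $n_1$ with multiplicity as $q_1, \dotsc, q_r$, so that $n_1 = q_1 \dotsb q_r$ and each $q_i \leq x^{\beta-\alpha}$.
  \item Consider the increasing chain $e_0 = d_2$ and $e_i = d_2 q_1 \dotsb q_i$. Each $e_i$ divides $n$, and $e_r = d_2 n_1 \geq x^\alpha$.
  \item Let $i$ be the least index with $e_i \geq x^\alpha$; then $i \geq 1$ since $e_0 < x^\alpha$.
  \item By minimality $e_{i-1} < x^\alpha$, so $e_i = e_{i-1} q_i < x^\alpha \cdot x^{\beta-\alpha} = x^\beta$.
\end{enumerate}
Thus $e_i$ is a divisor of $n$ lying in $[x^\alpha, x^\beta]$.

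The only delicate point is the bookkeeping of strict versus non-strict endpoints, since the section defines $h$ with open inequalities but $\deltabar$ with a closed interval. I would handle this by fixing the closed-interval convention of $\deltabar$ throughout. The step bound $q_i \leq x^{\beta-\alpha}$ then gives $e_i \leq x^\beta$ in step (e) even in the borderline case. The case $n_1 = 1$ is included, since then the chain is trivial and step (a) applies. Apart from this, I expect no real obstacle: the whole proof rests on the prime factors of $n_1$ being small enough to ``step'' across an interval of logarithmic length $\beta - \alpha$.
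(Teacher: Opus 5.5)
Your proof is correct and follows the paper's own argument. In the forward direction you observe that a divisor $d\le x^\beta$ is coprime to $n_3$ and split $d=d_1d_2$. In the converse you walk along the chain $d_2, q_1d_2, \dotsc, n_1d_2$, whose consecutive ratios are at most $x^{\beta-\alpha}$; your minimal-index formulation just makes explicit the paper's ``at least one element must belong to $[x^\alpha,x^\beta]$'' step.
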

The useful feature here is the fact that~$n_2$ consists only of large prime divisors, which as was observed in Tenenbaum's work~\cite[p.~31]{Tenenbaum1980}, is key to reduce the computation of~$h(\alpha, \beta)$ to a finite sum. A similar phenomenon takes place in the work of Haddad~\cite{Haddad2026} (see equation~(23) there).

\begin{proof}
  First it is obvious that~$\deltabar(n; [\alpha, \beta]) = \deltabar(n_1n_2; [\alpha, \beta])$, since any divisor~$d\mid n$ sharing a prime divisor~$p$ with~$n_3$ must be at least as large as~$p$, but~$p>x^\beta$ by definition of~$n_3$.

  Assume that there exists~$d\mid n$, $x^\alpha \leq d \leq x^\beta$, so that~$\deltabar(n; [\alpha, \beta])=0$, and split~$d = d_1 d_2$ with~$d_1 \mid n_1$ and~$d_2 \mid n_2$. Then we have obviously~$d_2 \leq d \leq x^\alpha$, and also~$d_2 \geq d / n_1 \geq x^\alpha / n_1$. Therefore~$\deltabar(n_2; [\alpha-\frac{\log n_1}{\log x}, \beta]) = 0$ also.

  Assume conversely that there exists~$d_2 \mid n_2$, $x^\alpha / n_1 \leq d_2 \leq x^\beta$. If~$d_2 \geq x^\alpha$, we are done. Otherwise write~$n_1 = p_1 \dots p_r$ with~$p_j$ primes, not necessarily distinct, and consider the divisors
  $$ d_2, \quad p_1 d_2, \quad p_1p_2 d_2, \quad \dotsc, \quad p_1 \dotsb p_r d_2 = n_1d_2. $$
  By hypothesis we have~$d_2 < x^\alpha$ and~$n_1 d_2 \geq x^\alpha$. Moreover, since~$p_j \leq x^{\beta - \alpha}$ by definition of~$n_1$, the ratio between two consecutive numbers in this sequence is at most~$x^{\beta - \alpha}$. Therefore at least one such element must belong to~$[x^\alpha, x^\beta]$, and so~$\deltabar(n; [\alpha, \beta])=0$.
\end{proof}

\begin{proof}[Proof of Theorem~\ref{th:expr-h}]
  We compute the limit
  $$ 1 - h(\alpha, \beta) = \lim_{x\to \infty} \frac1x \sum_{n\leq x} \deltabar(n; [\alpha, \beta]) $$
  using the decomposition from Lemma~\ref{lem:jump}. First we isolate the contribution from~$n_2 \leq x^\alpha / n_1$, which is
  \begin{align*}
    S_1 ={}& \frac1x \ssum{n_1 \leq x \\ P^+(n_1) \leq x^{\beta-\alpha}} \ssum{n_2 \leq x^\alpha / n_1 \\ p\mid n_2 \implies x^{\beta - \alpha} < p \leq x^\beta} \ssum{n_3 \leq x/n_1n_2 \\ P^-(n_3) > x^\beta} \deltabar(n_2; [\alpha - \tfrac{\log n_1}{\log x}, \beta]) \\
    ={}& \frac1x \ssum{m < x^\alpha \\ P^+(m) \leq x^{\beta}} \ssum{n_3 \leq x/m \\ P^-(n_3) > x^\beta} 1 \\ 
    ={}& \ssum{m < x^\alpha} \frac1{m \log(x/m)} B\Big(\frac{1 - \frac{\log m}{\log x}}{\beta}\Big) + O\Big(\frac1{\log x}\Big)
  \end{align*}
  by \cite[Theorem~III.6.3]{Tenenbaum2015}. The resulting sum is evaluated, using partial summation, as
  \begin{align*}
    S_1 ={}& O\Big(\frac1{\log x}\Big) + \int_1^{x^\alpha} \frac1{z\log(x/z)} B\Big(\frac{1 - \frac{\log z}{\log x}}\beta\Big)\df z \\
    ={}& O\Big(\frac1{\log x}\Big) + \int_{1-\alpha}^1 B\Big(\frac s\beta\Big) \frac{\df s}{s}
  \end{align*}
  by a change of variables~$z = x^{1-s}$. Writing~$B(s/\beta)/s = B'(s/\beta+1)/\beta$, we conclude that
  $$ S_1 = B\Big(\frac1\beta+1\Big) - B\Big(\frac{1-\alpha}\beta + 1\Big) + O\Big(\frac1{\log x}\Big) $$
  which provides the first term in~\eqref{eq:formule_h}.

  Next we are to evaluate
  \begin{equation}\label{eq:n1n2_grands}
    \ssum{n = n_1 n_2 n_3 \leq x \\ n_1n_2 > x^\alpha} \deltabar(n_2; [\alpha - \tfrac{\log n_1}{\log x}, \beta]),
  \end{equation}
  where the decomposition of~$n$ is according to~\eqref{eq:n-decomp-n1n2n3}. We focus on the contribution of~$n_3>1$, which we call~$S_2$.
  We start by observing that the contribution to~$S_2$ of those integers~$n_2$ divisible by the square of a prime, is at most
  $$ \frac1x \ssum{n\leq x \\ \exists p>x^{\beta-\alpha}, p^2 \mid n} 1 \ll x^{\alpha-\beta} = o(1). $$
  Next, we write a squarefree~$n_2 > 1$ with~$P^-(n_2) > x^{\beta - \alpha}$ in a unique way as~$n_2 = p_1 \dotsc p_k$, where~$1\leq k \leq 1/(\beta-\alpha)$ and~$p_j$ are primes subject to~$p_1 > \dotsb > p_k$. We write
  $$ t_j = \frac{\log p_j}{\log x}, \qquad t = (t_1, \dotsc, t_k), $$
  and remark that, by definition of~$\deltabar$ and Proposition~\ref{prop:char-fun}, we have
  \begin{align*}
    \deltabar(n_2; [\alpha - \tfrac{\log n_1}{\log x}, \beta]) ={}& \1( (t_j) \text{ has no subsum in } [\alpha - \tfrac{\log n_1}{\log x}, \beta]) \\
    ={}& \sum_{F\in \cF_k} \1(t \in P_F(\alpha - \tfrac{\log n_1}{\log x}, \beta)).
  \end{align*}
  The quantity above is $0$ whenever $n_1>x^\alpha$. We deduce
  \begin{equation}\label{eq:S_2}
    S_2 = o(1) + \sum_k \sum_{F\in \cF_k} \frac1x \ssum{n_1 \leq x^\alpha \\ P^+(n_1) \leq x^{\beta - \alpha}} \ssum{1 < n_3 \leq x/n_1 \\ P^-(n_3) > x^\beta} \ssum{x^\beta\geq p_1 > \dotsb > p_k > x^{\beta-\alpha}\\ x^\alpha/n_1 < p_1 \dotsb p_k \leq x/(n_1n_3)} \1(t \in P_F(\alpha - \tfrac{\log n_1}{\log x}, \beta)). 
  \end{equation}
  We fix~$k, F$ and evaluate the inner sums, which we call~$S_2(F)$.
  Note that the summand does not depend on~$n_3$. Executing first the sum over~$n_3$ therefore yields
  $$ \frac1x \ssum{1<n_3 \leq x/(n_1 p_1 \dotsb p_k) \\ P^-(n_3) > x^\beta} 1 = \frac{1}{\beta\log x} \frac{\omega(\frac{1 - \sum_j t_j - \frac{\log n_1}{\log x}}{\beta})}{n_1 p_1 \dotsb p_k} + O\Big(\frac1{n_1 p_1 \dotsb p_k (\log x)^2}\Big). $$
  Here it is important to note that the contribution of~$n_3=1$ was removed. The error term contributes trivially~$O(1/\log x)$ to~$S_2$, and we get
  \begin{align}\label{eq:S_2(F)}
    S_2(F) &= o(1) + \frac{1}{\beta\log x}\ssum{n_1 \leq x^\alpha \\ P^+(n_1) \leq x^{\beta - \alpha}} \frac{1}{n_1}G_k\Big(\frac{\log n_1}{\log x}\Big)
  \end{align}
  where we have set for $\nu\in[0,\alpha]$,
  $$ L_k(\nu) = \ssum{p_1, \dotsc, p_k \\ t\in P_F(\alpha,\beta,\nu)}\frac{\omega(\frac{1 - \nu-\sum_j t_j}{\beta})}{p_1 \dotsb p_k}, $$
  where $P_F(\alpha,\beta,\nu)$ is the polytope defined by
  \[
    P_F(\alpha,\beta,\nu) := P_F(\alpha-\nu,\beta) \cap \left\{
      t\in\R^k, \quad
      \begin{aligned}
        & \beta-\alpha < t_k < \dotsb < t_1 < \beta,\\
        & \alpha-\nu < t_1+ \dotsb + t_k < 1-\nu
      \end{aligned}
    \right\}
  \]
  Here we see that if $F$ is the null boolean function, then the set $P_F(\alpha,\beta,\nu)$ is empty. Indeed, if $F(1,\ldots,1) = 0$, an element $t\in P_F(\alpha,\beta,\nu)$ would verify both $t_1+\cdots+t_k<\alpha-\nu$ and $t_1+\cdots+t_k>\alpha-\nu$ . Similarly, the set $P_F(\alpha,\beta,\nu)$ is empty if~$F(x)=1$ for some tuple~$x\in V_k$ having a single component equal to~$1$. Indeed, in such a case an element $t\in P_F(\alpha,\beta,\nu)$ would have a component $t_i$ verifying both $t_i<\beta$ and $t_i>\beta$. These observations lead to the fact that the $3$ boolean functions in $\cF_1$ do not contribute to~$S_2$, hence we can start the sum at $k=2$ in~\eqref{eq:S_2}.
  We are now in a position to express $L_k(\nu)$ in terms of integrals, using~\cite[Lemma~2.6]{Mounier2025}:
  $$L_k(\nu) = \int_{P_F(\alpha,\beta,\nu)}\omega\Big(\frac{1 - \nu-\sum_j t_j}{\beta}\Big)\cdot\frac{\mathrm{d}t}{t_1\cdots t_k} + O_k((\log x)^{-1})$$
  Inserting this expression in \eqref{eq:S_2(F)} and swapping the sum over $n_1$ and the integral yields
  \begin{equation}\label{eq:S_2(F)_2}
    S_2(F) = o(1) + \frac{1}{\beta\log x} \int_{\substack{\beta-\alpha < t_k < \ldots < t_1 < \beta \\ t_1 + \dotsb + t_k < 1 \\ t\in P_F(\alpha,\beta)}} \frac{\df t}{t_1\dotsb t_k}\ssum{x^A<n\leq x^B \\ P^+(n)\leq x^C} \frac 1n \omega\Big(\frac{1 - \frac{\log n}{\log x}-\sum_j t_j}{\beta}\Big)
  \end{equation}
  where~$ A = \max(0,\alpha-\sum_jt_j)$, $B = \min(1-\sum_jt_j, \alpha-\max_{x\in F^{-1}(0)}\sum_jx_jt_j)$, and~$C = \beta-\alpha$.
  We evaluate the inner sum using Dickman's theorem~\cite[Theorem~III.5.6]{Tenenbaum2015} and partial summation, as
  $$ \ssum{x^A < n\leq x^B \\ P^+(n)\leq x^C} \frac 1n \omega\Big(\frac{1 - \frac{\log n}{\log x} - \sum_j t_j}{\beta}\Big) = (\log x) \int_A^B \rho\big(\tfrac{v}{\beta-\alpha}\big) \omega\big(\tfrac{1 - v - \sum_j t_j}{\beta}\big)\df v + O_{\beta-\alpha}(1) $$
  uniformly in~$A$ and~$B$.Inserting this expression in \eqref{eq:S_2(F)_2} yields
  \begin{align*}
    S_2(F) &= o(1) + \frac{1}{\beta} \int_0^\alpha \rho\Big(\frac{v}{\beta -\alpha}\Big) \int_{P_F(\alpha,\beta,v)} \frac{1}{t_1\cdots t_k} \omega\Big(\frac{1 - v - \sum_j t_j}{\beta}\Big) \df t \df s \\
    &= o(1) - \int_{P_F} \rho\Big(\frac{1-\sum_j t_j - s}{\beta-\alpha}\Big) B\Big(\frac s\beta\Big) \frac{\df t \df s}{t_1 \dotsb t_k s}
  \end{align*}
  where now
  \begin{equation}
    P_F = P_F(\alpha, \beta) = \begin{cases}
      \beta > t_1 > \dotsb > t_k > \beta - \alpha, \\
      0<s<1-\alpha<s + \sum_i t_i < 1 \\
      F(J) = 1 \implies \sum_J t_j > \beta, \\
      F(J) = 0 \implies \sum_{J^c} t_j > 1-\alpha-s.
    \end{cases}\label{eq:def-DeltaF}
  \end{equation}
  Inserting this in \eqref{eq:S_2} yields
  $$S_2 = o(1)-\sum_{2\leq k < 1/(\beta-\alpha)}  \sum_{P \in \cP_k} \int_P \rho\Big(\frac{1-\sum_j t_j - s}{\beta-\alpha}\Big) B\Big(\frac s\beta\Big) \frac{\df t \df s}{t_1 \dotsb t_k s}$$
  where~$\cP_k = \cP_k(\alpha, \beta) = \{P_F, F\in\cF_k\}\setminus\{\varnothing\}$ (we do not take into account those~$F$ for which~$P_F$ is empty).
  It remains to estimate the contribution of $n_3=1$ in \eqref{eq:n1n2_grands}, which we call $S_3$. In this case, the sum over $n_3$ disappears, so does the $B$ function. Following the same steps as previously, we get
  \begin{equation}\label{eq:S_3}
    S_3 = o(1) + \sum_{2\leq k < 1/(\beta-\alpha)}\sum_{P \in \cP'_k} \int_P \rho\Big(\frac{1-\sum_j t_j}{\beta-\alpha}\Big) \frac{\df t}{t_1 \dotsb t_k}
  \end{equation}
  where $\cP_k' = \{P_F', F\in\cF_k\}\setminus\{\varnothing\}$ and
  \begin{equation}
    P_F' = \begin{cases}
      \beta > t_1 > \dotsb > t_k > \beta - \alpha, \\
      1-\alpha< \sum_i t_i < 1 \\
      F(J) = 1 \implies \sum_J t_j > \beta, \\
      F(J) = 0 \implies \sum_{J^c} t_j > 1-\alpha.
    \end{cases}\label{eq:def-DeltaFprime}
  \end{equation}
  Finally, \eqref{eq:formule_h} follows from summing $S_1, S_2$ and $S_3$.

  Next, for each~$P\in \cP_k$, we note that the constraints in~$P$ are linear in the variables~$\alpha, \beta$. Therefore the set
  $$ \Pi_F := \{ (\alpha, \beta, t), 0<\alpha<\beta<1, t\in P_F(\alpha, \beta)\} $$
  is itself a polytope. Using the Polyhedron class of SageMath~\cite{SageMath}, we have computed for~$k\leq 7$ the complete list of polytopes~$\Pi_F$ for~$F \in\cF_k$, and we found that~$n_k$ of them are non-empty, where~$n_k$ are the numbers given in~\eqref{eq:nk}. This proves that~$\abs{\cP_k} \leq n_k$ uniformly in~$\alpha, \beta$. We carried out the same computations with~$\cP_k'$ and found that the same holds for the same values~$n_k$.

  It remains to prove the upper bound \eqref{eq:maj_k_grand}. First, note that $\cP_k = \cP_{k'} = \varnothing$ whenever $k\geq \frac{1}{\beta-\alpha}$. Let $k_0 < \frac{1}{\beta-\alpha}$.
  For all~$n\in\N$ and~$y\geq 1$, define~$\omega_y(n) := \sum_{p\mid n,  p>y} 1$.
  For all~$k\geq 0$ and~$u>0$, it is proved by Knuth-Pardo~\cite{KnuthPardo1977} that the limit
  \begin{equation}
    G_k(u) := \lim_{x\to\infty} \frac1x \sum_{n\leq x} \1(\omega_{x^{1/(k+u)}}(n)\geq k)\label{eq:def-Gk}
  \end{equation}
  exists and defines a smooth function of~$u>0$: indeed the sum on the right-hand side corresponds to~$P_k(\frac1{k+u}, x)$ in the notation of~\cite[Section~3]{KnuthPardo1977}, and therefore~$G_k(u) = 1 - \rho_k(k + u)$ with the notations~\cite[eqs.~(4.2)--(4.4)]{KnuthPardo1977}.
  \begin{lemma}\label{lem:Gk}
    We have~$G_k(u) \leq u^k / (k!)^2$ for all~$u>0$ and~$k\geq 1$.
  \end{lemma}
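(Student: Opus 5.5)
We bound $G_k(u)$ by a union bound over the $k$ largest prime factors exceeding $y := x^{1/(k+u)}$. If $\omega_y(n)\geq k$, then $n$ is divisible by a product $p_1\dotsb p_k$ of primes with $p_1>\dotsb>p_k>y$, or by $p^2$ for some $p>y$. The second case has density $\ll 1/y\to 0$ and can be dropped. Moreover $p_1\dotsb p_k \leq x$, so
$$ \frac1x\sum_{n\leq x}\1(\omega_y(n)\geq k) \leq o(1) + \frac1x \ssum{y<p_k<\dotsb<p_1 \\ p_1\dotsb p_k\leq x} \Big\lfloor \frac{x}{p_1\dotsb p_k}\Big\rfloor \leq o(1) + \ssum{y<p_k<\dotsb<p_1 \\ p_1\dotsb p_k\leq x}\frac{1}{p_1\dotsb p_k}. $$
Set $t_j=\log p_j/\log x$. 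By Mertens' theorem (the same passage from $S(P,x)$ to $I(P)$ recalled in the introduction), the last sum tends to
$$ I_k(u) := \int_{\substack{t_1>\dotsb>t_k>1/(k+u) \\ t_1+\dotsb+t_k\leq 1}} \frac{\df t}{t_1\dotsb t_k}. $$
So it will suffice to show $I_k(u)\leq u^k/(k!)^2$.

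To bound $I_k(u)$, substitute $t_j = \frac{1}{k+u}(1+v_j)$ with $v_j>0$. The constraint $\sum t_j\leq 1$ becomes $\sum_j v_j\leq u$, and $\df t/(t_1\dotsb t_k) = \df v/\prod_j(1+v_j)$. Since $1+v_j\geq 1$, we get
$$ I_k(u)\leq \int_{\substack{v_1>\dotsb>v_k>0 \\ \sum_j v_j\leq u}} \df v = \frac{1}{k!}\cdot\frac{u^k}{k!}. $$
Here the first factor $1/k!$ comes from the ordering of the $v_j$, and $u^k/k!$ is the volume of the simplex $\{v_j>0, \sum_j v_j\leq u\}$. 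This gives exactly $u^k/(k!)^2$.

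The only point needing care is the limiting step. The sum over primes converges to the integral only in the limit $x\to\infty$, which is fine because $G_k$ is itself defined as a limit in \eqref{eq:def-Gk}. The boundary conditions (strict versus non-strict inequalities, and repeated primes) have density zero, so they do not affect the limit. If one wanted to avoid invoking Mertens on a polytope, one could instead bound $\sum 1/p$ over dyadic ranges directly, but the paper's citation of \cite[Lemma~9]{Tenenbaum1980} already covers this case. We expect no real obstacle beyond checking that the crude bound $\prod_j(1+v_j)^{-1}\leq 1$ loses nothing essential, which it does not for the stated inequality.
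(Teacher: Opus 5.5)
Your proof is correct, and it takes a genuinely different route from the paper's.

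\textbf{The paper's route.} The paper argues by induction on $k$. It relies on the Knuth--Pardo integral equation
\[
G_k(u)=\int_1^{k+u}\bigl(G_{k-1}(t-k)-G_k(t-k-1)\bigr)\frac{\df t}{t},
\]
which first requires identifying $G_k(u)$ with $1-\rho_k(k+u)$ in their notation. It then drops the negative term by positivity, uses the vanishing of $G_{k-1}$ at negative arguments, and bounds $1/(v+k)\leq 1/k$ at each step.

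\textbf{Your route.} You work directly from the definition \eqref{eq:def-Gk}:
\begin{itemize}
\item a first-moment (union) bound over $k$-tuples of distinct primes exceeding $x^{1/(k+u)}$;
\item Mertens' theorem, to pass to the polytope integral;
\item the substitution $t_j=(1+v_j)/(k+u)$, after which the crude bound $\prod_j(1+v_j)^{-1}\leq 1$ leaves the volume of an ordered simplex.
\end{itemize}
I checked the change of variables and the constraint $\sum_j v_j\leq u$, and both are right.

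\textbf{Comparison.} Your argument is self-contained and uses only the convergence $S(P,x)\to I(P)$ that underlies the rest of the paper. It has three side benefits:
\begin{itemize}
\item it only bounds the $\limsup$, so it does not need the existence of the limit defining $G_k$, nor the external Knuth--Pardo recursion;
\item it yields the slightly sharper intermediate bound $G_k(u)\leq\int_{v_1>\dotsb>v_k>0,\ \sum_j v_j\leq u}\df v/\prod_j(1+v_j)$ at no cost;
\item it transfers verbatim to the way the lemma is used for $E_1(k_0)$, namely counting integers with at least $k_0$ prime factors exceeding $x^{\beta-\alpha}$.
\end{itemize}
The paper's argument is shorter once the Knuth--Pardo machinery is granted, and it stays within the delay-equation framework in which $G_k$ is introduced.

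\textbf{A minor point.} Since $\omega_y$ counts distinct prime factors, the alternative ``$p^2\mid n$ for some $p>y$'' in your first step never arises. Your union bound therefore holds for every $x$ with no $o(1)$ term.
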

  \begin{proof}
    The claim is obvious for~$k=0$ since then~$G_0(u) = 1$ for all~$u>0$. Let~$k\geq 1$ and suppose we have proved our claim for~$k-1$.
    By~\cite[eq.~(4.2)]{KnuthPardo1977}, we have
    $$ G_k(u) = \int_1^{k+u} (G_{k-1}(t-k) - G_k(t-k-1))\frac{\df t}{t} \leq \int_1^{k+u} G_{k-1}(t-k)\frac{\df t}{t} $$
    by positivity of~$G_k$. Since~$G_{k-1}(s)$ vanishes if~$s<0$, we may restrict integration to~$t\geq k$. Writing~$t = k+v$, we get
    \[ G_k(u) \leq \int_0^u G_{k-1}(v) \frac{\df v}{v + k}. \]
    Our claimed bound follows by induction as
    $$ G_k(u) \leq \frac1k \int_0^u G_{k-1}(v) \df v \leq \frac1{k (k-1)!^2} \int_0^u v^{k-1} \df v \leq \frac{u^k}{(k!)^2}. $$
  \end{proof}

  To prove the bound~$E_1$ in~\eqref{eq:maj_k_grand}, recall that~$k$ in our argument refers to the number of prime divisors of~$n$ which belong to the interval~$(x^{\beta-\alpha}, x^\beta]$. In particular, every such prime divisor is larger than~$x^{\beta-\alpha}$, and thus
  $$ \abs{\sum_{k\geq k_0} \ssum{n\leq x \\ \omega(n_2) = k} \deltabar(n_2; [\alpha - \tfrac{\log n_1}{\log x}, \beta]) } \leq \ssum{n\leq x \\ \omega_{x^{\beta-\alpha}}(n) \geq k_0} 1. $$
  This last quantity is zero if~$\beta - \alpha > 1/k_0$, and otherwise by Lemma~\ref{lem:Gk} it is equal to
  $$ G_k(1/(\beta-\alpha) - k_0) x  + o(x) \leq E_1(k_0) x + o(x) $$
  which proves our first bound in~\eqref{eq:maj_k_grand}.

  To prove the second bound~$E_2(k_0)$ of~\eqref{eq:maj_k_grand}, we upper-bound
  $$ \abs{\sum_{k\geq k_0}\ssum{n\leq x \\ \omega(n_2) = k} \deltabar(n_2; [\alpha - \tfrac{\log n_1}{\log x}, \beta]) } \leq \sum_{k\geq k_0} \psi_k(x, x^{\beta-\alpha}, x^\alpha) \leq x f_k^*\big(\tfrac{\beta-\alpha}{\beta}, \tfrac1\beta\big) + o(x) $$
  in the notations of~\cite[Théorème~C]{Tenenbaum1980}. Indeed, since any prime divisor~$p\mid n_2$ is at most~$x^\beta$, we must have~$p<x^\alpha$ since otherwise~$\deltabar(n_2; \dotsb) = 0$. Then our claimed bound follows by~\cite[Théorème~C.(iv)]{Tenenbaum1980}.

\end{proof}

\subsection{Numerical estimates on~$h(\alpha, \beta)$}\label{sec:numer-estim-hab}

The graph representing~$h(\alpha, \beta)$ in a region slightly away from the diagonal was depicted in Figure~\ref{fig:h-values} above.
The computational complexity grows very fast as~$(\alpha, \beta)$ approaches the diagonal. In Figure~\ref{fig:h-nb-integrals}, we depicted~$\log N$ where~$N$ is the number of integrals involved in the computation of a value~$h(\alpha, \beta)$; the maximum was~$N=6215$.

\begin{figure}[h]
  \caption{}
  \centering
  \begin{subfigure}[t]{.33\textwidth}
    \centering
    \image{1}{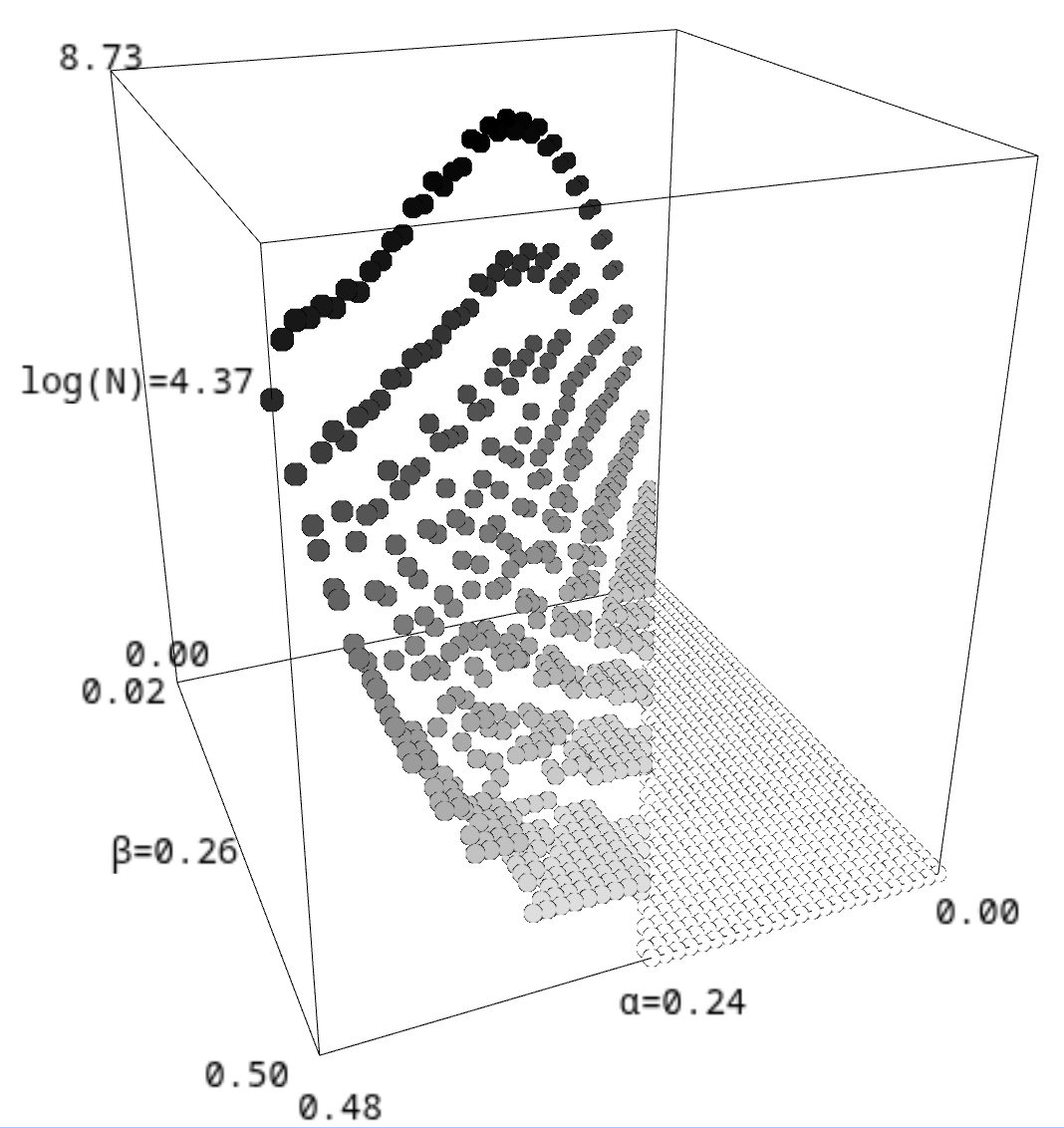}
    \caption{Log of the number of integrals involved in the computation of~$h(\alpha, \beta)$}
    \label{fig:h-nb-integrals}
  \end{subfigure}%
  ~
  \begin{subfigure}[t]{.33\textwidth}
    \centering
    \image{1}{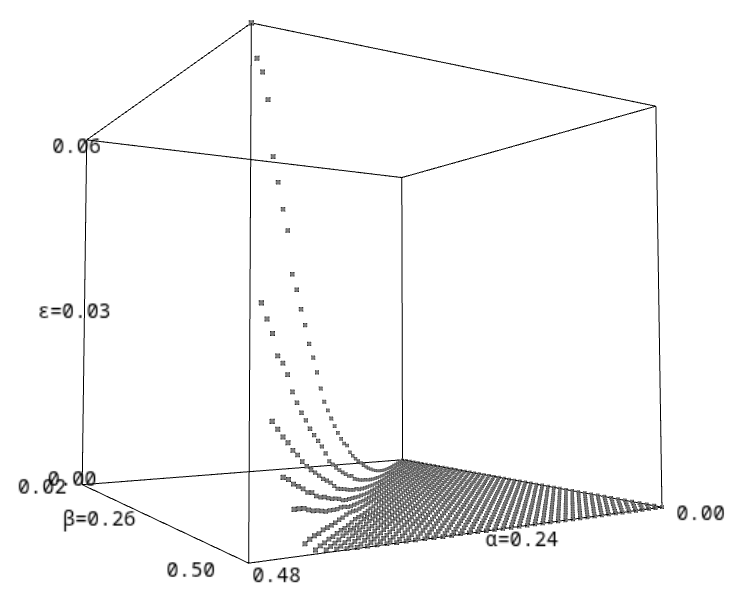}
    \caption{Bound on the precision for the computation of~$h(\alpha, \beta)$}
    \label{fig:h-precision}
  \end{subfigure}%
  ~
  \begin{subfigure}[t]{.33\textwidth}
    \centering
    \image{1}{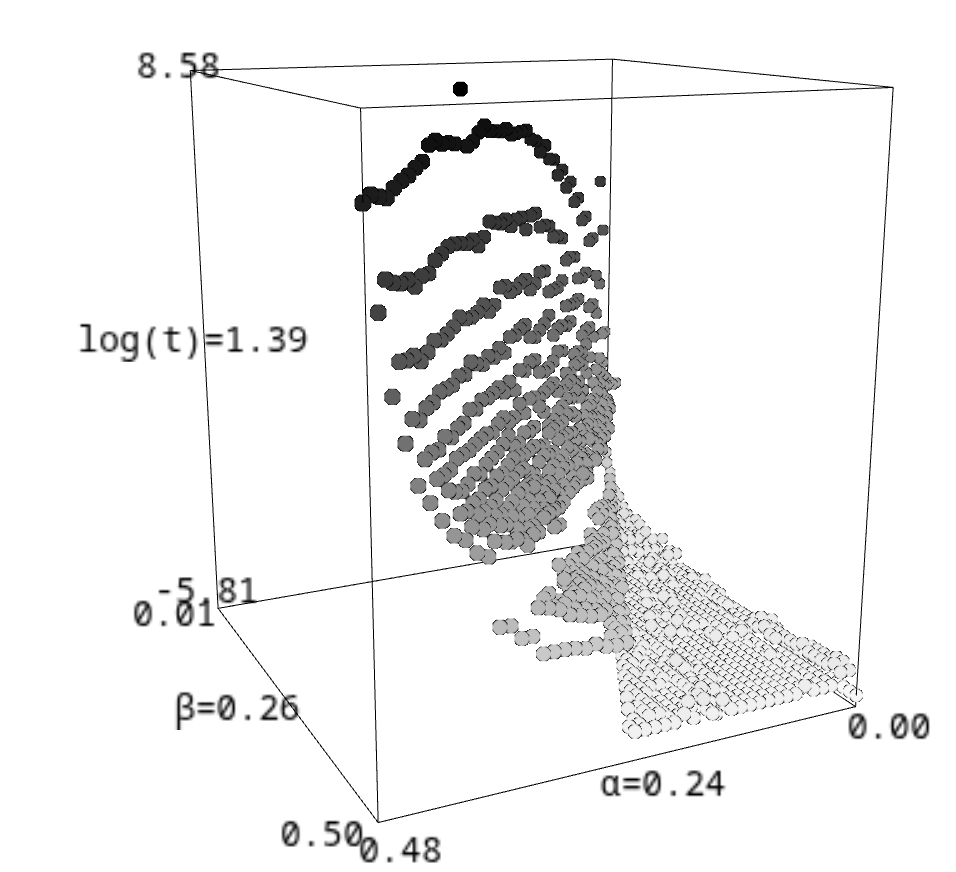}
    \caption{Log of the time of computation of~$h(\alpha, \beta)$}
    \label{fig:h-timing}
  \end{subfigure}
\end{figure}

The computation was requested with~$14$ bits of precision, however the actual returned precision deteriorates as the number of involved integrals, and the number of balanced polytopes required in the computation, increase. Figure~\ref{fig:h-precision} represents the certified precision of the final result. Finally, Figure~\ref{fig:h-timing} represents the computation time, in logarithmic scale. The computation time increases exponentially as~$(\alpha, \beta)$ approaches the diagonal; the largest values took 20 minutes. The computation time undergoes jumps as higher values of~$k$ become involved in the computation (and thus more boolean functions need to be analyzed).

\subsection{The average logarithm of the middle divisor}

In this section we justify Corollary~\ref{cor:mid-div}.
Let~$\hbar(\alpha, \beta) := 1 - h(\alpha, \beta)$. We start by using the symmetry of divisors,
$$ h(\alpha, \tfrac12) = h(\alpha, 1 - \alpha). $$
Changing variables as~$\alpha = \frac{1-\eta}2$ with~$0\leq \eta \leq 1$, we get
$$ c = \frac12 + \int_0^{1/2} \hbar(\alpha, 1-\alpha) \df \alpha = \frac12\big(1 + \int_0^1 \hbar\big(\tfrac{1-\eta}2, \tfrac{1+\eta}2\big) \df \eta\big). $$
Let~$\eta^*>0$ be a small parameter. We split the integral at~$\eta^*$, using the trivial bound~$\hbar(\dotsb) \leq 1$ for those~$\eta\leq \eta^*$.
For those~$\eta>\eta^*$ we use the expression from Theorem~\ref{th:expr-h}. This gives
$$ c = \tfrac12(1 + c_1 + c_2 + c_2') + \eps_1, \qquad \eps_1 \in [0,\eta^*/2], $$
where
\begin{align}
  c_1 ={}& \int_{\eta^*}^1 \Big(B\Big(\frac2{1+\eta} + 1\Big) - B(2)\Big)\df \eta, \notag \\
  c_2 ={}& \sum_{2\leq k \leq 1/\eta^*} \sum_{P\in \cP_k} \int_{\eta^*}^1 \int_P \rho\Big(\frac{1-\sum_j t_j - s}{\eta}\Big) B\Big(\frac{2s}{1+\eta}\Big) \frac{\df t \df s \df \eta}{t_1 \dotsb t_k s}, \label{eq:def-c2} \\
  c_2' ={}& \sum_{2\leq k \leq 1/\eta^*} \sum_{P\in \cP_k'} \int_{\eta^*}^1 \int_P \rho\Big(\frac{1-\sum_j t_j}{\eta}\Big) \frac{\df t \df \eta}{t_1 \dotsb t_k }. \label{eq:def-c2p}
\end{align}
First, since~$1\leq 2/(1+\eta) \leq 2$, we have~$B(2/(1+\eta) + 1) = 1 + \log(2/(1+\eta))$ for~$\eta\in [0, 1]$ and thus~$c_1 = \int_{\eta^*}^1 \log(2/(1+\eta)) \df \eta = 1 - \eta^* - (1 + \eta^*) \log(2/(1 + \eta^*))$.

Next, we claim that~$c_2 = 0$. Indeed the second inequality in the definition~\eqref{eq:def-DeltaF} (with~$\alpha = \tfrac{1-\eta}2$) implies~$s < \frac{1+\eta}2$, so that~$2s/(1+\eta) < 1$, thus since~$B(u)$ vanishes for~$u<1$ the integrand in~$c_2$ vanishes identically.

Finally there remains the computation of~$c_2'$. We pick~$\eta^* = 0.01$, which gives~$c_1 \in [0.29997, 0.29998]$.
First we dismiss the values~$k\geq 8$ using the bounds~\eqref{eq:maj_k_grand}. Let~$E(\eta)$ be the minimum of the two quantities~\eqref{eq:maj_k_grand} evaluated at~$k_0=8$, $\alpha = \frac{1-\eta}2$ and~$\beta = \frac{1 + \eta}2$. Note that~$E(\eta)$ is decreasing. By sampling~$E(\eta)$ at multiples of~$10^{-5}$, we get
$$ c_2' = c''_2 + \eps_2, \qquad \eps_2 \leq \int_{\eta^*}^1 E(\eta) \df \eta \in [0, 0.002509], $$
where~$c''_2$ is the contribution of~$k\leq 7$ in~$c_2'$. Finally we compute the integrals in~$c''_2$ using the method described in Section~\ref{sec:comp-sieve-integr}, requesting 17 bits of precision. We chose different balancing parameters~$\eta$ depending on the dimension: $\eta = \log(1.3)$ for~$3\leq k \leq 5$, $\eta = \log(1.6)$ for~$k=6$ and~$\eta = \log 2$ for~$k=7$ (there is no contribution from~$k=2$). The reason is to be able to dismiss faster the contribution from~$k\geq 5$ by trivial bounds. The computation returned the bound~$0.004163 \leq c''_2 \leq 0.01899$ (with the contribution from~$k\geq 5$ being~$\leq 0.011102$). Gathering the various bounds we deduce successively~$c_2' \in [0.004162, 0.02150]$ and~$c\in[0.6520, 0.6658]$, and thus Corollary~\ref{cor:mid-div} follows.

\bibliographystyle{amsplain2}
\bibliography{sieve-num-integral}

\providecommand{\bysame}{\leavevmode\hbox to3em{\hrulefill}\thinspace}
\providecommand{\MR}{\relax\ifhmode\unskip\space\fi MR }
\providecommand{\MRhref}[2]{%
  \href{http://www.ams.org/mathscinet-getitem?mr=#1}{#2}
}
\providecommand{\href}[2]{#2}
\begin{thebibliography}{10}

\bibitem{BaldoniEtAl2011}
V.~Baldoni, N.~Berline, J.~A. De~Loera, M.~K{\"o}ppe, and M.~Vergne, \emph{How
  to integrate a polynomial over a simplex}, Math. Comput. \textbf{80} (2011),
  no.~273, 297--325.

\bibitem{Barvinok1991}
A.~I. Barvinok, \emph{Computation of exponential integrals}, Zap. Nauchn.
  Semin. Leningr. Otd. Mat. Inst. Steklova \textbf{192} (1991), 149--162.

\bibitem{Brion1988}
M.~Brion, \emph{Lattice points in convex polyhedra.}, Ann. Sci. {\'E}c. Norm.
  Sup{\'e}r. (4) \textbf{21} (1988), no.~4, 653--663.

\bibitem{Bukhshtab1937}
A.~A. Buchshtab, \emph{Asymptotische {Abschätzung} einer allgemeinen
  zahlentheoretischen {Funktion}}, Rec. Math. Moscou, n. Ser. \textbf{2}
  (1937), 1239--1246.

\bibitem{CarliniEtAl2012}
E.~Carlini, M.~V. Catalisano, and A.~V. Geramita, \emph{The solution to the
  {Waring} problem for monomials and the sum of coprime monomials}, J. Algebra
  \textbf{370} (2012), 5--14.

\bibitem{Chen1973}
J.~Chen, \emph{On the representation of a larger even integer as the sum of a
  prime and the product of at most two primes}, Sci. Sin. \textbf{16} (1973),
  157--176.

\bibitem{DavisRabinowitz1984}
P.~J. Davis and P.~Rabinowitz, \emph{Methods of numerical integration. 2nd ed},
  second edition ed., Computer Science and Applied Mathematics, Academic Press,
  Inc., 1984.

\bibitem{DeLoeraEtAl2011}
J.~A. De~Loera, B.~Dutra, M.~K{\"o}ppe, S.~Moreinis, G.~Pinto, and J.~Wu,
  \emph{Software for exact integration of polynomials over polyhedra}, ACM
  Commun. Comput. Algebra \textbf{45} (2011), no.~3, 169--172.

\bibitem{DeLoeraEtAl2013}
\bysame, \emph{Software for exact integration of polynomials over polyhedra},
  Comput. Geom. \textbf{46} (2013), no.~3, 232--252.

\bibitem{Dickman1930}
K.~Dickman, \emph{On the frequency of numbers containing prime factors of a
  certain relative magnitude.}, Ark. Mat. Astron. Fys. \textbf{22 A} (1930),
  no.~10, 14.

\bibitem{github-sieve-integral}
S.~Drappeau and A.~Mounier, \emph{Computation of sieve integrals using
  {LattE}}, GitHub repository, 2026, Available at
  \url{https://github.com/sarydrappeau/sieve_integral}.

\bibitem{Erdos1960}
P.~Erd{\H{o}}s, \emph{On an asymptotic inequality in number theory}, Vestn.
  Leningr. Univ., Mat. Mekh. Astron. \textbf{15} (1960), no.~3, 41--49.

\bibitem{Ford2008}
K.~Ford, \emph{The distribution of integers with a divisor in a given
  interval}, Ann. of Math. (2) \textbf{168} (2008), no.~2, 367--433.

\bibitem{FordMaynard2024}
K.~Ford and J.~Maynard, \emph{On the theory of prime producing sieves}, arXiv
  preprint, 2024.

\bibitem{FriedlanderIwaniec2010}
J.~Friedlander and H.~Iwaniec, \emph{Opera de cribro}, Colloq. Publ., Am. Math.
  Soc., vol.~57, Providence, RI: American Mathematical Society (AMS), 2010.

\bibitem{GreenSawhney2026}
B.~Green and M.~Sawhney, \emph{The proportion of permutations fixing a
  $k$-set}, arXiv preprint.

\bibitem{GrimmeltMerikoski2025}
L.~Grimmelt and J.~Merikoski, \emph{On the greatest prime factor and uniform
  equidistribution of quadratic polynomials}, arXiv preprint, 2025.

\bibitem{Haddad2026}
T.~Haddad, \emph{Poisson-{Dirichlet} approximation for counting integers with
  divisors in an interval}, arXiv preprint, 2026.

\bibitem{HaddadKoukoulopoulos2025}
T.~Haddad and D.~Koukoulopoulos, \emph{On {Arratia}'s coupling and the
  {Dirichlet} law for the factors of a random integer}, J. {\'E}c. Polytech.,
  Math. \textbf{12} (2025), 1565--1604.

\bibitem{HalberstamRichert1974}
H.~Halberstam and H.-E. Richert, \emph{Sieve methods}, Academic Press,
  London-New York, 1974, London Mathematical Society Monographs, No. 4.

\bibitem{Harman1983}
G.~Harman, \emph{On the distribution of {$\alpha p$} modulo one}, J. Lond.
  Math. Soc., II. Ser. \textbf{27} (1983), 9--18.

\bibitem{Harman2007}
\bysame, \emph{Prime-detecting sieves}, Lond. Math. Soc. Monogr. Ser., vol.~33,
  Princeton, NJ: Princeton University Press, 2007.

\bibitem{Knuth2011}
D.~E. Knuth, \emph{The art of computer programming. {Volume} {4A}.
  {Combinatorial} algorithms. {Part} 1.}, Upper Saddle River, NJ:
  Addison-Wesley, 2011.

\bibitem{KnuthPardo1977}
D.~E. Knuth and L.~T. Pardo, \emph{Analysis of a simple factorization
  algorithm}, Theor. Comput. Sci. \textbf{3} (1977), 321--348.

\bibitem{Lasserre1983}
J.~B. Lasserre, \emph{An analytical expression and an algorithm for the volume
  of a convex polyhedron in {{\(R^ n\)}}.}, J. Optim. Theory Appl. \textbf{39}
  (1983), 363--377.

\bibitem{Lawrence1991}
J.~Lawrence, \emph{Polytope volume computation}, Math. Comput. \textbf{57}
  (1991), no.~195, 259--271.

\bibitem{Li2024}
R.~Li, \emph{On the largest prime factor of quadratic polynomials}, arXiv
  preprint, 2024.

\bibitem{Li2025}
\bysame, \emph{The number of primes in short intervals and numerical
  calculations for {Harman}'s sieve}, arXiv preprint, 2025.

\bibitem{Li2026}
\bysame, \emph{On {Chen}'s theorem, {Goldbach}'s conjecture and almost prime
  twins {II}}, Math. Rep., Buchar. \textbf{28} (2026), no.~1-2, 39--61.

\bibitem{MarsagliaEtAl1989}
G.~Marsaglia, A.~Zaman, and J.~Marsaglia, \emph{Numerical solution of some
  classical differential-difference equations}, Math. Comput. \textbf{53}
  (1989), no.~187, 191--201.

\bibitem{Matomaeki2009}
K.~Matom{\"a}ki, \emph{The distribution of {{\(\alpha p\)}} modulo one}, Math.
  Proc. Camb. Philos. Soc. \textbf{147} (2009), no.~2, 267--283.

\bibitem{MatomaekiZuniga-Alterman2025}
K.~Matom{\"a}ki and S.~Zuniga-Alterman, \emph{Weighted sieves with switching},
  Math. Proc. Camb. Philos. Soc. \textbf{179} (2025), no.~2, 351--372.

\bibitem{Maynard2019}
J.~Maynard, \emph{Primes with restricted digits}, Invent. Math. \textbf{217}
  (2019), no.~1, 127--218, Mathematica code available at
  \url{https://arxiv.org/src/1604.01041}.

\bibitem{Merikoski2023}
J.~Merikoski, \emph{On the largest prime factor of {{\(n^2 + 1\)}}}, J. Eur.
  Math. Soc. (JEMS) \textbf{25} (2023), no.~4, 1253--1284.

\bibitem{Mounier2025}
A.~Mounier, \emph{An effective lower bound sieve for friable numbers}, Acta
  Arith. \textbf{220} (2025), no.~2, 121--159.

\bibitem{OEIS}
{OEIS Foundation Inc.}, \emph{The {O}n-{L}ine {E}ncyclopedia of {I}nteger
  {S}equences}, 2025, Published electronically at \url{http://oeis.org}.

\bibitem{Stadlmann2022}
J.~Stadlmann, \emph{On the mean square gap between primes}, arXiv preprint,
  2022.

\bibitem{TakahasiMori1974}
H.~Takahasi and M.~Mori, \emph{Double exponential formulas for numerical
  integration}, Publ. Res. Inst. Math. Sci. \textbf{9} (1974), 721--741.

\bibitem{Tenenbaum1980}
G.~Tenenbaum, \emph{Lois de répartition des diviseurs, 2}, Acta Arith.
  \textbf{38} (1980), no.~1, 1--36.

\bibitem{Tenenbaum1984}
\bysame, \emph{Sur la probabilité qu’un entier possède un diviseur dans un
  intervalle donné}, Compos. Math. \textbf{51} (1984), 243--263.

\bibitem{Tenenbaum2015}
\bysame, \emph{Introduction to analytic and probabilistic number theory.
  {Transl}. from the 3rd {French} edition by {Patrick} {D}. {F}. {Ion}}, 3rd
  expanded ed. ed., Grad. Stud. Math., vol. 163, Providence, RI: American
  Mathematical Society (AMS), 2015.

\bibitem{PARI-GP}
{The PARI~Group}, Univ. Bordeaux, \emph{{PARI/GP version \texttt{2.15.4}}},
  2023, available from \url{http://pari.math.u-bordeaux.fr/}.

\bibitem{SageMath}
{The Sage Developers}, \emph{{S}agemath, the {S}age {M}athematics {S}oftware
  {S}ystem ({V}ersion 10.7)}, 2025-08-09, {\tt https://www.sagemath.org}.

\bibitem{Mathematica}
{Wolfram Research, Inc.}, \emph{Mathematica, {V}ersion 14.3}, Champaign, IL,
  2025.

\end{thebibliography}

\end{document}